 \newtheorem{theorem}{Theorem}[section]
 \newtheorem{cor}[theorem]{Corollary}
 \newtheorem{prop}[theorem]{Proposition}
 \theoremstyle{definition}
 \theoremstyle{remark}
 \numberwithin{equation}{section}
\newcommand{\dist}{{\mathop{\rm dist}}}
\newcommand{\tr}{{\mathop{\rm tr}}}
\newcommand{\rank}{{\mathop{\rm rank}}}
\newcommand{\A}{{\mathcal A}}
\newcommand{\B}{{\mathcal B}}
\newcommand{\hc}{{\mathcal H}}
\newcommand{\hk}{{\mathcal K}}
\newcommand{\sL}{{\mathcal L}}
\newcommand{\eps}{\varepsilon}
\newcommand{\norm}[1]{\left\lVert#1\right\rVert}
\newcommand{\bF}{{\mathbb{F}}}
\newcommand{\bR}{{\mathbb{R}}}
\newcommand{\bC}{{\mathbb{C}}}
\newcommand{\bra}{\langle}
\newcommand{\ket}{\rangle}
\newcommand{\tens}[1]{\mathbin{\mathop{\otimes}\limits_{#1}}}
\begin{document}


\title{Birkhoff-James orthogonality and applications : A survey}


\author[Grover]{Priyanka Grover}

\address{Department of Mathematics, Shiv Nadar University,\\
 Dadri,  U.P. 201314\\
India.}

\email{priyanka.grover@snu.edu.in}

\author[Sushil]{Sushil Singla}
\address{Department of Mathematics, Shiv Nadar University,\\
 Dadri,  U.P. 201314\\
India.}
\email{ss774@snu.edu.in}


\subjclass{Primary 15A60, 41A50, 46B20, 46L05, 46L08; Secondary 46G05, 47B47} 

\keywords{Orthogonality, tangent hyperplane, smooth point,  faces of unit ball,  Gateaux differentiability,  subdifferential set, state on a $C^*$-algebra, cyclic representation, norm-parallelism,  conditional expectation}

\begin{abstract} 

In the last few decades, the concept of Birkhoff-James orthogonality has been used in several applications. In this survey article,  the results known on the necessary and sufficient conditions for Birkhoff-James orthogonality  in certain Banach spaces  are mentioned. Their applications in studying the geometry of normed spaces are given. The connections between this concept of orthogonality, and the Gateaux derivative and the subdifferential set of the norm function are provided. Several interesting distance formulas can be obtained using the characterizations of Birkhoff-James orthogonality, which are also mentioned. In the end, some new results are obtained.  

\end{abstract}

\maketitle



\section{Introduction}

Let $(V, \norm{\cdot})$ be a normed space over the field $\bR$ or $\bC$. For normed spaces $V_1, V_2$, let $\mathscr B(V_1,V_2)$ denotes the space of bounded linear operators from $V_1$ to $V_2$ endowed with the \emph{operator norm}, and let $\mathscr B(V)$ denotes $\mathscr B(V,V)$. Let $K(V_1, V_2)$ denotes the space of compact operators from $V_1$ to $V_2$. Let $\hc$ be a Hilbert space over $ \bR$ or $\bC$.  If the underlying field is $ \bC$, the inner product on $\hc$ is taken to be linear in the first coordinate and conjugate linear in the second coordinate. The notations $M_n(\bR)$ and $M_n(\bC)$ stand for $n\times n$ real and complex matrices, respectively.

Normed spaces provide a natural setting for studying geometry in the context of vector spaces. While inner product spaces capture the concept of the measure of an angle,  orthogonality of two vectors can be described without knowing the notion of measure of angle. For example, a vector $v$ is orthogonal to another vector $u$ in $\bR^n$ if and only if there exists a rigid motion $T$ fixing the origin such that the union of rays $\overrightarrow{0u}, \overrightarrow{0T(v)}, \overrightarrow{0T(u)}$ minus the open ray $\overrightarrow{0v}$ is the one dimensional subspace generated by $u$ \footnote{We learnt this characterization of orthogonality in $\bR^n$  from Amber Habib.}. This description of orthogonality by just using the notion of distance in $\bR^n$ motivates to try and define orthogonality in normed spaces. In this approach,  one can use the intuition about orthogonality in $\bR^n$ to guess the results in general normed spaces and then prove them algebraically. This has been done in  \cite{Amir, Birkhoff, James 1, Roberts}.

One of the definitions for orthogonality in a normed space suggested by Roberts \cite{Roberts}, known as \emph{Roberts orthogonality}, is defined as follows: elements $u$ and $v$ are said to be (Roberts) orthogonal if $\|v+t u\|=\|v-tu\|$ for all scalars $t$. In \cite[Example 2.1]{James 1}, it was shown that this definition  has a disadvantage that there exist normed  spaces in which two elements are Roberts orthogonal  implies that one of the element has to be zero.  In \cite{James 1}, two more inequivalent definitions of orthogonality in normed spaces were introduced. One of them is \emph{isosceles orthogonality} which says that $v$ is isosceles orthogonal to $u$ if $\|v+u\|=\|v-u\|$. The other one is called \emph{Pythagorean orthogonality}, that is, $v$ is Pythagorean orthogonal to $u$ if $\|v\|^2+\|u\|^2=\|v-u\|^2$. Note that if $V$ is an inner product space, all the above mentioned definitions are equivalent to the usual orthogonality in an inner product space. Isosceles and Pythagorean orthogonalities have geometric intuitions for the corresponding definitions.  In $\bR^n$, two vectors are isosceles perpendicular if and only if their sum and difference can be sides of an isosceles triangle, and  two vectors are Pythagorean perpendicular if there is a right triangle having the two vectors as legs. In \cite{James 1}, it was also proved that if $u$ and $v$ are two elements of a normed space, then there exist scalars $a$ and $ b$ such that $v$ is isosceles orthogonal to $av+u$ (see \cite[Theorem 4.4]{James 1}) and $v$ is Pythagorean orthogonal to $bv+u$ (see \cite[Theorem 5.1]{James 1}). So these definitions don't have the above mentioned weakness of Roberts orthogonality. 

In an inner product space, the following properties are satisfied by orthogonality. Let $u, u_1, u_2, v, v_1, v_2\in V$.

\begin{enumerate}
\item \textit{Symmetry:} If $v\bot u$, then $u\bot v$.
\item \textit{Homogeneity:} If $v\bot u$, then $av\bot bu$ for all scalars $a$ and $b$.
\item \textit{Right additivity:} If $v\bot u_1$ and $v\bot u_2$, then $v\bot (u_1+u_2)$. \\
\textit{Left additivity:} If $v_1\bot u$ and $v_2\bot u$, then $(v_1+v_2)\bot u$.
\item There exists a scalar $a$ such that $v\bot av+u$. (In $\bR^n$, this corresponds to saying that any plane containing a vector $v$ contains a vector perpendicular to $v$.)
\end{enumerate}

It is a natural question to study the above properties for any given definition of orthogonality. 
All the above definitions clearly satisfy symmetry. 
James \cite[Theorem 4.7, Theorem 4.8, Theorem 5.2, Theorem 5.3]{James 1} proved that if isosceles or Phythagorean orthogonality satisfy homogeneity or (left or right) additivity, then $V$  has to be an inner product space. These orthogonalities have been extensively studied in \cite{Amir,James 1,Roberts}.

In \cite{Birkhoff}, Birkhoff defined a concept of orthogonality, of which several properties were studied by James in \cite{James 2}.
An element $v$ is said to be \emph{Birkhoff-James} orthogonal to $u$ if $\norm{v}\leq\norm{v+ku}$ for all scalars $k$. 
The analogy in $\bR^n$ is that if two lines $L_1$ and $L_2$ intersect at $p$, then $L_1 \perp L_2$ if and only if the distance from a point of $L_2$ to a given point $q$ of $L_1$ is never less than the distance from $p$ to $q$.
This definition clearly satisfies the homogeneity property. In \cite[Corollary 2.2]{James 2}, it was shown that this definition also satisfies $(4)$.  But it lacks symmetry, for example, in $(\bR^2, \|\cdot\|_{\max})$, where $\|(t_1,t_2)\|_{\max}=\max\{|t_1|, |t_2|\}$, take  $v=(1, 1)$ and $ u=(1, 0)$ ($v\perp u$ but $u\not\perp v$). It is  not right additive, for example,  in $(\bR^2, \|\cdot\|_{\max})$,  take $ v=(1,1)$, $u_1=(1,0)$ and $u_2=(0,1)$. It is  also not left additive, for example, in $(\bR^2, \|\cdot\|_{\max})$,  take $ v_1=(1,1)$, $v_2=(0,-1)$ and $u=(1,0)$.

Let $W$ be a subspace of $V$. Then an element $v\in V$ is said to be Birkhoff-James orthogonal to $W$ if $v$ is Birkhoff-James orthogonal to $w$ for all $w\in W$. 
A closely related concept  is that of a \emph{best approximation to a point in a subspace}. An element $w_0\in W$ is said to be a best approximation to $v$ in $W$ if $\norm{v-w_0} \leq \norm{v-w}$ for all $w\in W$.
Note that $w_0$ is a best approximation to $v$ in $W$ if and only if $v-w_0$ is Birkhoff-James orthogonal to $W$. These are also equivalent to saying that $\dist(v, W):=\inf\{\norm{v-w}:w\in W\} = \norm{v-w_0}$. So $v$ is Birkhoff-James orthogonal to  $W$  if and only if $\dist(v, W)$ is attained at $0$. Therefore the study of these concepts go hand in hand (see \cite{Singer}).  
This is one of the reasons that this definition of orthogonality, even though not symmetric, is still being extensively studied in literature. Henceforward, orthogonality will stand for Birkhoff-James orthogonality. 

Recently, a lot of work has been done in the form of applications of this concept of orthogonality and the main goal of this survey article is to bring all the related work under one roof. In Section \ref{Sec2}, we mention the connections between  orthogonality and geometry of normed spaces. We also deal with the question as to when the   orthogonality is symmetric or (left or right) additive. This leads us to the study of various related notions like characterizations of smooth points and extreme points,  subdifferential set, $\varphi$-Gateaux derivatives etc. In Section \ref{Sec3}, characterizations of orthogonality in various Banach spaces are discussed along with some applications. In Section \ref{Sec4}, these characterizations are used to obtain distance formulas in some Banach spaces. Some of the stated results are new and  will appear in more detail in \cite{2019}. Theorems \ref{7}, \ref{7777} and \ref{21} are the new results given with proofs only here.

\section{Orthogonality and geometry of normed spaces}\label{Sec2}

A \emph{hyperplane} is a closed subspace of codimension one. A connection between the concept of orthogonality and hyperplanes is given in the next theorem. An element $v$ is orthogonal to a subspace $W$ if and only if there exists a linear functional $f$ on $V$ such that $\norm{f} =1$, $f(w) = 0$ for all $w\in W$ and $f(v) = \norm{v}$ (see \cite[Theorem 1.1, Ch. I]{Singer}). This is equivalent to the following. 

\begin{theorem}\label{th2.1}(\cite[Theorem 2.1]{James 2}): Let $W$ be a subspace of $V$. Let $v\in V$. Then $v$ is orthogonal to $W$ if and only if there is a hyperplane $H$ with $v$ orthogonal to $H$ and $W\subseteq H$.
\end{theorem}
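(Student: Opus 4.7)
The plan is to deduce the theorem directly from the functional characterization stated immediately before it, namely that $v$ is orthogonal to $W$ if and only if there exists $f \in V^*$ with $\|f\| = 1$, $f|_W = 0$, and $f(v) = \|v\|$. The whole content of Theorem \ref{th2.1} is then a translation of this fact into the language of hyperplanes, using the standard correspondence between hyperplanes and (scalar classes of) nonzero bounded linear functionals: if $f \in V^*$ is nonzero, then $\ker f$ is a closed subspace of codimension one, and conversely every hyperplane $H$ arises this way via the quotient map $V \to V/H$ (composed with a scalar identification of the one-dimensional quotient).

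For the $(\Leftarrow)$ direction there is essentially nothing to prove: if $H$ is a hyperplane with $v$ orthogonal to $H$ and $W \subseteq H$, then for every $w \in W \subseteq H$ and every scalar $k$ we have $\|v\| \le \|v + kw\|$, so $v$ is orthogonal to $W$ by definition. I would dispatch this in one line.

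For the forward direction, assume $v$ is orthogonal to $W$ (the case $v = 0$ is trivial after choosing any hyperplane containing $W$, which exists as long as $W \ne V$, and when $W = V$ both sides of the biconditional hold vacuously only for $v = 0$). Invoking the cited characterization I obtain $f \in V^*$ with $\|f\| = 1$, $f|_W = 0$, and $f(v) = \|v\|$. Since $f(v) = \|v\| \ne 0$, $f$ is nonzero, so $H := \ker f$ is a hyperplane, and $W \subseteq H$ because $f$ annihilates $W$. To verify that $v$ is orthogonal to $H$, I take an arbitrary $h \in H$ and a scalar $k$ and estimate
\begin{equation*}
\|v + kh\| \;\ge\; |f(v + kh)| \;=\; |f(v) + k f(h)| \;=\; |f(v)| \;=\; \|v\|,
\end{equation*}
where the first inequality uses $\|f\| = 1$ and the middle equality uses $h \in \ker f$. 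Thus $v$ is orthogonal to $H$.

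No step should present a serious obstacle; the only thing that demands mild care is the bookkeeping around the correspondence between hyperplanes and nonzero continuous functionals (in particular, the fact that $\ker f$ is automatically closed and of codimension one for $f \in V^* \setminus \{0\}$), and the trivial handling of the degenerate case $v = 0$. The proof is really a packaging exercise, so I would keep it short and emphasize that the substantive work is hidden in the preceding functional characterization (which itself rests on a Hahn--Banach extension argument).
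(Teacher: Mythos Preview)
Your proposal is correct and aligns exactly with the paper's treatment: the paper does not give its own proof of Theorem~\ref{th2.1} but simply asserts that it is equivalent to the preceding functional characterization (citing \cite[Theorem~1.1, Ch.~I]{Singer}), and your argument is precisely the spelling-out of that equivalence via $H=\ker f$. The only quibble is your handling of the degenerate case $W=V$, $v=0$, where in fact the right-hand side of the biconditional fails (no hyperplane contains $V$) rather than holding vacuously; but this is a defect of the theorem as literally stated, not of your method.
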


By the Hahn-Banach theorem and Theorem \ref{th2.1}, it is easy to see that any element of a normed  space is orthogonal to some hyperplane (see \cite[Theorem 2.2]{James 2}). The relation between orthogonality and hyperplanes is much deeper. We first recall some definitions. For $v\in V$, we say $S\subseteq V$ \emph{supports} the closed ball $D[v, r]:=\{x\in V: \|x-v\|\leq r\}$ if $\dist(S, D[v,r]) = 0$ and $S\ \cap\ \text{Int } D[v,r] = \emptyset$.  This is also equivalent to saying that $\dist(v, S) = r$ (see \cite[Lemma 1.3, Ch. I]{Singer}). Let $v_0$ be an element of the boundary of $D[v,r]$. A hyperplane $H$ is called a \emph{support hyperplane to $D[v,r]$ at $v_0$} if $H$ passes through $v_0$ and supports $D[v,r]$, and it is called a \emph{tangent hyperplane to $D[v,r]$ at $v_0$} if $H$ is the only support hyperplane to $D[v,r]$ at $v_0$. A real hyperplane is a hyperplane in $V$, when $V$ is considered as a real normed space.

\begin{theorem} (\cite[Theorem 1.2, Ch. I]{Singer}) Let $W$ be a subspace of $V$. Let $v\in V$. Then $v$ is orthogonal to $W$ if and only if there exists a support hyperplane to $D[v,r]$ at $0$ passing through $W$ if and only if there exists a real hyperplane which supports the closed ball $D[v, \norm{v}]$ at $0$ and passes through $W$. 
\end{theorem}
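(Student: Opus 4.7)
The plan is to chain three implications (a) $\Rightarrow$ (b) $\Rightarrow$ (c) $\Rightarrow$ (a), where (a), (b), (c) denote the three conditions in the theorem. The main tool is the functional characterization recalled in the paragraph just before Theorem \ref{th2.1}: $v$ is orthogonal to $W$ if and only if there is a norm-one linear functional $f$ on $V$ with $f|_W\equiv 0$ and $f(v)=\norm{v}$. The dictionary between support hyperplanes to $D[v,\norm{v}]$ at $0$ and such functionals is $H=\ker f$, combined with the distance formula $\dist(v,\ker f)=|f(v)|/\norm{f}$, so that supporting at $0$ is exactly $|f(v)|=\norm{v}\norm{f}$.

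First I would show (a) $\Rightarrow$ (b). Starting from $v\perp W$, pick the functional $f$ from the characterization and set $H:=\ker f$. Then $W\subseteq H$ and $0\in H\cap D[v,\norm{v}]$. For any $x\in H$, $\norm{v-x}\geq |f(v-x)|=|f(v)|=\norm{v}$, so $H\cap \operatorname{Int}D[v,\norm{v}]=\emptyset$ and $\dist(H,D[v,\norm{v}])=0$, confirming that $H$ is a support hyperplane to $D[v,\norm{v}]$ at $0$.

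Next I would check (b) $\Rightarrow$ (c). In the real case nothing is to be done, since a hyperplane is already a real hyperplane. In the complex case, write the support hyperplane $H$ from (b) as $\ker f$ for a continuous complex linear $f$; normalizing $\norm{f}=1$ and multiplying by a unit scalar, the support condition $\dist(v,H)=\norm{v}$ gives $f(v)=\norm{v}$. Set $g:=\mathrm{Re}\,f$, a norm-one $\bR$-linear functional, and $H':=\ker g$. Then $H'$ is a real hyperplane, $W\subseteq H\subseteq H'$, and $g(v)=\mathrm{Re}\,f(v)=\norm{v}$; so $\norm{v-x}\geq |g(v-x)|=\norm{v}$ for every $x\in H'$, and $H'$ supports $D[v,\norm{v}]$ at $0$.

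Finally I would prove (c) $\Rightarrow$ (a). A real support hyperplane $H'$ at $0$ can be written as $\ker g$ for an $\bR$-linear functional $g$ with $\norm{g}=1$, and $\dist(v,H')=\norm{v}$ gives $|g(v)|=\norm{v}$; a sign change ensures $g(v)=\norm{v}$. In the real case this $g$ is the functional we need. In the complex case, define $f(x):=g(x)-ig(ix)$, so that $f$ is $\bC$-linear, $\mathrm{Re}\,f=g$, and $\norm{f}=\norm{g}=1$. Since $W$ is a complex subspace, $ix\in W$ whenever $x\in W$, hence $f|_W\equiv 0$. From $|f(v)|^{2}=g(v)^{2}+g(iv)^{2}=\norm{v}^{2}+g(iv)^{2}\leq \norm{f}^{2}\norm{v}^{2}=\norm{v}^{2}$ one gets $g(iv)=0$ and thus $f(v)=\norm{v}$. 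The functional characterization then yields $v\perp W$.

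The only delicate step is the real/complex distinction between (b) and (c); everything else is bookkeeping around the identity $\dist(v,\ker \ell)=|\ell(v)|/\norm{\ell}$. The standard correspondence $f\longleftrightarrow \mathrm{Re}\,f$ between complex linear and $\bR$-linear functionals, together with the norm-preserving reconstruction $f(x)=g(x)-ig(ix)$, carries all the weight and is the natural place to watch out.
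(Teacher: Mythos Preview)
Your proof is correct; the paper does not supply its own argument for this statement but merely quotes it from Singer's book, so there is no in-paper proof to compare against. Your route through the functional characterization stated just before Theorem~\ref{th2.1}, together with the standard real/complex correspondence $f\leftrightarrow\mathrm{Re}\,f$, is the expected classical treatment.
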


A direct consequence follows. If  $W$ is a non-trivial subspace of $V$, then $0$ is the unique best approximation of $v$ in $W$ if and only if there exists a tangent hyperplane to $D[v,r]$ at $0$ passing through $W$ (see \cite[Corollary 1.5, Ch. I]{Singer}).

The above results are related to the questions as to when the orthogonality is (left or right) additive or symmetric. It was shown in \cite[Theorem 5.1]{James 2} that orthogonality is right additive in  $V$ if and only if for any unit vector $v\in V$, there is a tangent hyperplane to $D[v, \norm{v}]$ at $0$. There are other interesting characterizations for (left or right) additivity of  orthogonality. To state them, some more definitions are required.
A normed space $V$ is called a \emph{strictly convex} space if given any $v_1,v_2\in V$, whenever $\norm{v_1} +\norm{v_2} = \norm{v_1+v_2}$ and $v_2\neq 0$, then there exists a scalar $k$ such that $v_1=k v_2$. This is also equivalent to saying that if $\norm{v_1} = \norm{v_2} = 1$ and $v_1\neq v_2$, then $\norm{v_1+v_2} <2$.
The norm $\norm{\cdot}$  is said to be \emph{Gateaux differentiable} at $v$ if $$\lim\limits_{h\rightarrow 0} \dfrac{\norm{v+hu} - \norm{v}}{h}$$ exists for all $u\in V$.

Now we have the following characterizations for the orthogonality to be right additive in $V$.

\begin{theorem}\label{1} The following statements are equivalent.
\begin{enumerate}
\item Orthogonality is right additive.
\item Norm is Gateaux differentiable at each nonzero point.
\item For $v\in V$, there exists a unique functional $f$ of norm one on $V$ such that $f(v) = \norm{v}$.
\item For $v\in V$, there is a tangent hyperplane to $D[v, \norm{v}]$ at $0$.
\end{enumerate}
If  $V$ is a reflexive space, then the above are also equivalent to the  following statements.
\begin{enumerate}
\item[(5)] Any bounded linear functional on a given subspace of $V$ has a unique norm preserving Hahn-Banach extension on $V$.
\item[(6)] The dual space $V^*$ is strictly convex.
\end{enumerate}
\end{theorem}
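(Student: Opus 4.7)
The plan is to establish the four equivalences $(1)\Leftrightarrow(4)\Leftrightarrow(3)\Leftrightarrow(2)$ in general, and then, under reflexivity, to append $(5)$ and $(6)$ to the cycle. The equivalence $(1)\Leftrightarrow(4)$ is precisely the James theorem recalled in the paragraph immediately preceding the statement (\cite[Theorem 5.1]{James 2}), so nothing further is required for that step.

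For $(3)\Leftrightarrow(4)$, I would set up a direct correspondence between support hyperplanes to $D[v,\norm{v}]$ at $0$ and norming functionals of $v$. Any real hyperplane through $0$ is of the form $\ker f$ for some nonzero $f\in V^*$; after normalizing $\norm{f}=1$, the standard distance formula $\dist(v,\ker f)=|f(v)|$ shows that $\ker f$ supports $D[v,\norm{v}]$ at $0$ precisely when $|f(v)|=\norm{v}$. Replacing $f$ by $-f$ if necessary, this sets up a bijection between support hyperplanes at $0$ and the set of norming functionals $S(v):=\{f\in V^*:\norm{f}=1,\ f(v)=\norm{v}\}$. A tangent hyperplane at $0$ therefore exists iff $|S(v)|=1$, which is exactly $(3)$.

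For $(2)\Leftrightarrow(3)$, I would invoke the subdifferential picture of the norm. Convexity of $\norm{\cdot}$ ensures that the one-sided derivatives $\tau_{\pm}(v;u):=\lim_{t\to 0^\pm} t^{-1}(\norm{v+tu}-\norm{v})$ exist, with $\tau_+(v;\cdot)$ sublinear and $-\tau_+(v;-u)\le \tau_+(v;u)$. A Hahn-Banach argument identifies the subdifferential $\partial\norm{\cdot}(v):=\{f\in V^*:f(u)\le \tau_+(v;u)\text{ for all }u\in V\}$ with the set $S(v)$ above. Gateaux differentiability at $v$ is equivalent to the linearity of $\tau_+(v;\cdot)$, i.e. to $\tau_+(v;u)=-\tau_+(v;-u)$ for every $u$, which by convex duality is equivalent to $\partial\norm{\cdot}(v)$ being a singleton. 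This gives $(2)\Leftrightarrow(3)$.

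For the reflexive case, $(6)\Rightarrow(3)$ is immediate: two distinct $f_1,f_2\in S(v)$ would force $\norm{\tfrac{1}{2}(f_1+f_2)}=1$ via $\tfrac{1}{2}(f_1+f_2)(v)=\norm{v}$, contradicting strict convexity of $V^*$. For $(3)\Rightarrow(6)$, if $V^*$ is not strictly convex there exist distinct $f_1,f_2$ with $\norm{f_i}=1$ and $\norm{f_1+f_2}=2$; reflexivity then allows one to invoke James's norm-attainment theorem so that $f_1+f_2$ attains its norm at some $v\in V$ with $\norm{v}=1$, producing two distinct norming functionals of $v$ and violating $(3)$. Finally, $(5)\Leftrightarrow(6)$ is the Taylor-Foguel theorem and needs no reflexivity. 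The main delicacy is the step $(3)\Rightarrow(6)$, where reflexivity is genuinely needed to pull a witness of non-strict-convexity in $V^*$ back to a concrete point of $V$; every other implication uses only Hahn-Banach separation and convexity of the norm.
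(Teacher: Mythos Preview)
Your proposal is correct, but the paper's proof is purely a string of citations: it defers $(1)\Leftrightarrow(2)$ to \cite[Theorem~4.2]{James 2}, $(1)\Leftrightarrow(3)\Leftrightarrow(4)$ to \cite[Theorem~5.1]{James 2}, $(1)\Leftrightarrow(5)$ (under reflexivity) to \cite[Theorem~5.7]{James 2}, and declares $(5)\Leftrightarrow(6)$ a routine exercise. You instead supply the actual arguments, which is more self-contained and makes the dependencies transparent. In particular, your route through the reflexive part differs from the paper's: the paper links $(5)$ to $(1)$ via James and then $(5)$ to $(6)$ separately, whereas you link $(6)$ directly to $(3)$ and invoke Taylor--Foguel for $(5)\Leftrightarrow(6)$. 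Both routes work; yours isolates exactly where reflexivity enters (only in $(3)\Rightarrow(6)$), while the paper's citation to \cite[Theorem~5.7]{James 2} hides this.

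Two small remarks. First, in $(3)\Leftrightarrow(4)$ you write ``replacing $f$ by $-f$ if necessary''; over $\bC$ one replaces $f$ by a unimodular multiple $e^{i\theta}f$, but the bijection between support hyperplanes and $S(v)$ survives unchanged. Second, for $(3)\Rightarrow(6)$ you only need the easy direction that every functional on a reflexive space attains its norm (weak compactness of the unit ball), not the full James characterization; calling it ``James's norm-attainment theorem'' overstates the tool required.
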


\begin{proof}  Equivalence of $(1)$ and $(2)$ is proved in \cite[Theorem 4.2]{James 2} and equivalence of $(1)$, $(3)$ and $(4)$ is proved in  \cite[Theorem 5.1]{James 2}. For a  reflexive space, equivalence of $(1)$ and $(5)$ is given in  \cite[Theorem 5.7]{James 2}. Equivalence of $(5)$ and $(6)$  is a routine exercise in functional analysis.
\end{proof}

Characterization of inner product spaces of dimension three or more can be given in terms of (left or right) additivity or symmetry of orthogonality. Birkhoff \cite{Birkhoff} gave a necessary and sufficient condition for a normed space of dimension at least three to be an inner product space, and examples to justify the restriction on the dimension. James \cite[Theorem 6.1]{James 2} showed that a normed space of dimension at least three is an inner product space  if and only if orthogonality is right additive and symmetric if and only if the normed space is strictly convex and orthogonality is symmetric. Later, James improved his result and proved a much stronger theorem.
\begin{theorem}(\cite[Theorem 1, Theorem 2]{James 3}) Let $V$ be a normed space of dimension at least three. Then $V$ is an inner product space if and only if orthogonality is symmetric or left additive.
\end{theorem}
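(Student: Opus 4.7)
The forward direction is immediate: in an inner product space, Birkhoff--James orthogonality coincides with standard orthogonality, which is symmetric and additive in each slot.

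For the converse, my plan is to reduce both hypotheses to the characterization stated just above this theorem: in dimension at least three, $V$ is an inner product space if and only if $V$ is strictly convex and orthogonality is symmetric. In each case the aim is therefore to establish both of these conditions.

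Assume first that orthogonality is symmetric; then only strict convexity needs to be proved. Suppose it failed, so the unit sphere contains a nondegenerate segment $[v_1,v_2]$. Setting $v = \tfrac{1}{2}(v_1+v_2)$ and $u = \tfrac{1}{2}(v_1-v_2)$, we have $\norm{v+tu}=1=\norm{v}$ for $|t|\le 1$, and convexity of the norm extends this to $\norm{v+tu}\ge\norm{v}$ for all real $t$, so $v\perp u$. Using $\dim V\ge 3$, I would pick a unit vector $w \notin \operatorname{span}\{v_1,v_2\}$ and tilt this pair by small multiples of $w$ to produce vectors witnessing orthogonality in only one direction, contradicting symmetry; the flat face of the unit ball together with the third dimension supplies the asymmetry needed for the contradiction.

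Now assume that orthogonality is left additive. The plan is to deduce symmetry from this and then invoke the previous case. Left additivity makes $M_u := \{v \in V : v\perp u\}$ a linear subspace for every $u$, whereas the a priori unrelated set $\{v : u \perp v\}$ is (by Theorem \ref{th2.1} and Hahn--Banach) a union of hyperplanes, namely the kernels of the norming functionals of $u$. A careful analysis, using $\dim V\ge 3$ to manoeuvre across several independent directions, should show that left additivity forces these two sets to coincide, in particular forcing $M_u$ to be a single hyperplane and hence $u$ to be a smooth point; this yields both symmetry and smoothness at every point, and the preceding case then completes the argument. The principal obstacle is this left-additive case: one must extract symmetry from the subspace property of $M_u$ alone, and $\dim V \ge 3$ is essential, since two-dimensional polygonal norms can exhibit symmetric or left-additive orthogonality without being inner product spaces.
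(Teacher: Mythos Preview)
The paper does not prove this theorem; it merely records it as a citation to James's 1947 paper \cite{James 3}. So there is no proof in the paper to compare your attempt against. What you have written is a proof \emph{sketch} for a result whose actual proof (in James's paper) is substantially more delicate than your outline suggests, and your sketch contains genuine gaps.

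In the symmetric case, your plan is to show that symmetry forces strict convexity and then quote \cite[Theorem 6.1]{James 2}. But the step ``tilt this pair by small multiples of $w$ to produce vectors witnessing orthogonality in only one direction'' is not an argument: you have $v\perp u$ coming from a flat segment, and you need to exhibit a specific pair $v',u'$ with $v'\perp u'$ yet $u'\not\perp v'$. Nothing you wrote explains why introducing a third direction $w$ manufactures such a pair, and in two dimensions flat segments can coexist with symmetric Birkhoff--James orthogonality (e.g.\ the regular hexagon), so whatever mechanism you intend must genuinely use the extra dimension in a concrete way. That mechanism is absent.

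The left-additive case is weaker still. You assert that the subspace property of $M_u=\{v:v\perp u\}$, together with $\dim V\ge 3$, ``should show'' that $M_u$ coincides with $\{v:u\perp v\}$ and is a single hyperplane. This is precisely the heart of the matter and you have supplied no argument for it; saying ``a careful analysis'' is not a proof step. James's own route in \cite{James 3} does not proceed by first reducing left additivity to symmetry in the way you propose; his arguments for the two cases are separate and each is intricate. If you want to carry out your reduction strategy you would need, at minimum, to prove directly that left additivity forces every nonzero $u$ to be smooth, and that is not obvious.
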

A characterization of orthogonality to be symmetric or left additive in a normed space of dimension two can be found in \cite{Alonso}.  Several other necessary and sufficient conditions for a normed space to be an inner product space are given in \cite{Alonso, James 1}. This problem has also been extensively studied in \cite{Amir, Singer}.

An element $v$ is called a \emph{smooth point} of $D[0,\|v\|]$  if there exists a hyperplane tangent to $D[v, \norm{v}]$ at $0$. We say $v$ is a smooth point if it is a smooth point of $D[0,\|v\|]$. Equivalently, $v$ is a smooth point if there exists a unique affine hyperplane passing through $v$ which supports $D[0, \norm{v}]$ at $v$ (such an affine hyperplane is called the affine hyperplane tangent to $D[0, \norm{v}]$ at $v$). A normed space is called \emph{smooth} if all its vectors are smooth points. By Theorem \ref{1}, we get that orthogonality in a normed space is right additive if and only if the normed space is smooth.  We also have that $v$ is a smooth point if and only if the norm function is Gateaux differentiable at $v$:

\begin{theorem}\label{th2.5} Let $v\in V$. The norm function is Gateaux differentiable at $v$ if and only if there is a unique $f\in V^*$ such that $\|f\|=1$ and $f(v)=\|v\|$. In this case, the Gateaux derivative of the norm at $v$ is given by $\text{Re }f(u)$ for all  $u\in V$. In addition, for $u\in V$, we have that $v$ is orthogonal to $u$ if and only if $f(u) = 0$.
\end{theorem}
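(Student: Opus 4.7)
The plan is to exploit the convexity of $t\mapsto \|v+tu\|$ together with Hahn-Banach. First I would fix $v\neq 0$ and, for each $u\in V$, consider $\phi_u(t):=\|v+tu\|$, $t\in\bR$, which is a convex real-valued function. Hence its one-sided derivatives $\phi'_{u,\pm}(0):=\lim_{t\to 0^{\pm}}\tfrac{\|v+tu\|-\|v\|}{t}$ exist and satisfy $\phi'_{u,-}(0)\leq \phi'_{u,+}(0)$, and Gateaux differentiability at $v$ is precisely the statement that these two numbers coincide for every $u\in V$. The crucial link to support functionals (i.e., $f\in V^*$ with $\|f\|=1$ and $f(v)=\|v\|$, which exist by Hahn-Banach) is the elementary bound $\|v+tu\|\geq \text{Re }f(v+tu)=\|v\|+t\,\text{Re }f(u)$, which, after dividing by $t>0$ or $t<0$, produces the sandwich
\[\phi'_{u,-}(0)\leq \text{Re }f(u)\leq \phi'_{u,+}(0)\qquad\text{for every }u\in V.\]

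From this sandwich both the first equivalence and the formula fall out. For the direction ``differentiable $\Rightarrow$ unique $f$'', Gateaux differentiability forces $\text{Re }f(u)=\phi'_{u,+}(0)$ for every support functional $f$; in the complex case one recovers $f$ from its real part via $f(u)=\text{Re }f(u)-i\,\text{Re }f(iu)$, so $f$ is unique, and the common limit equals $\text{Re }f(u)$, which is the asserted formula. For the converse, I would invoke Hahn-Banach on the real space $V_{\bR}$: the map $u\mapsto \phi'_{u,+}(0)$ is sublinear and agrees with $tv\mapsto t\|v\|$ on the line $\bR v$, hence extends to an $\bR$-linear functional dominated by it; this extension is the real part of a support functional $f_+$ with $\text{Re }f_+(u)=\phi'_{u,+}(0)$, and a symmetric construction yields an $f_-$ realizing $\phi'_{u,-}(0)$. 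Uniqueness of the support functional then forces $\phi'_{u,+}(0)=\phi'_{u,-}(0)$ for all $u$, which is Gateaux differentiability.

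Finally, assuming $f$ is the unique support functional, the orthogonality claim is quick. If $f(u)=0$, then for every scalar $k$,
\[\|v+ku\|\geq |f(v+ku)|=|f(v)+kf(u)|=|f(v)|=\|v\|,\]
so $v$ is orthogonal to $u$. Conversely, if $v$ is orthogonal to $u$, then $k\mapsto \|v+ku\|$ attains its minimum at $k=0$; restricting to real $k=t$ forces $\phi'_{u,+}(0)=0=\phi'_{u,-}(0)$, hence $\text{Re }f(u)=0$, and in the complex case the same argument applied with $u$ replaced by $iu$ yields $\text{Re }f(iu)=-\text{Im }f(u)=0$, so $f(u)=0$. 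The principal obstacle is the Hahn-Banach step realizing each one-sided directional derivative as $\text{Re }$ of some support functional; everything else is routine manipulation of the inequality $\|x\|\geq |f(x)|$.
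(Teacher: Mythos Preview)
Your argument is essentially correct and, in fact, the paper does not supply its own proof of this theorem: it is stated as a classical fact, a pointwise version of the equivalence $(2)\Leftrightarrow(3)$ in Theorem~\ref{1} (attributed there to James), with the formula for the one-sided derivative recorded separately as Proposition~\ref{p2}. So there is nothing to compare at the level of proof strategy; your self-contained convexity/Hahn--Banach approach is the standard one behind those references.

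One point does need tightening. In the converse step you write that extending the linear functional $tv\mapsto t\|v\|$ from $\bR v$ under the sublinear majorant $p(x)=\lim_{t\to 0^+}(\|v+tx\|-\|v\|)/t$ produces a support functional $f_+$ with $\text{Re }f_+(u)=\phi'_{u,+}(0)$. Extending from $\bR v$ alone only yields $\text{Re }f_+\leq p$; it does not pin down the value at the particular direction $u$. To get $\text{Re }f_+(u)=\phi'_{u,+}(0)$ you must first prescribe $g(sv+tu)=s\|v\|+t\,\phi'_{u,+}(0)$ on the two-dimensional real subspace $\bR v+\bR u$, check that $g\leq p$ there (this is where the convexity of $t\mapsto\|v+tu\|$ and the subgradient inequality at $t=0$ enter), and only then apply Hahn--Banach. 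Equivalently, you may cite Proposition~\ref{p2} directly, since that proposition says precisely that $\phi'_{u,+}(0)$ is attained as $\text{Re }f(u)$ for some support functional $f$. With this correction (which you yourself flagged as the principal obstacle), the rest of your plan---including the treatment of the complex case via $f(u)=\text{Re }f(u)-i\,\text{Re }f(iu)$ and the orthogonality equivalence---goes through without further issue.
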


Smooth points and this connection with Gateaux differentiability was studied in \cite{Abatzoglou, Diestel,  Keckic, Keckic 1} and many interesting results can be obtained as their applications.  Let the space of continuous functions on a compact Hausdorff space $X$ be denoted by $C(X)$ and let the space of bounded continuous functions on a normal space $\Omega$ be denoted by $C_b(\Omega)$. Ke$\check{\text c}$ki$\acute{\text c}$ \cite[Corollary 2.2, Corollary 3.2]{Keckic 1} gave characterizations of smooth points in $C(X)$ and $C_b(\Omega)$. A characterization of smooth points in $\mathscr B(\hc)$ was given in \cite[Corollary 3.3]{Keckic}. For $\hc$ separable, Abatzoglou \cite[Corollary 3.1]{Abatzoglou} showed that the operators in $\mathscr B(\hc)$ of unit norm which are also smooth points  are dense in the unit sphere of $\mathscr B(\hc)$. In $K(\hc)$, this result was first proved by Holub \cite[Corollary 3.4]{Holub}. Heinrich \cite[Corollary 2.3]{Heinrich} generalized this result for $K(V_1, V_2)$, where $V_1$ is a separable reflexive Banach space and $V_2$ is any normed space. He proved that the operators which attain their norm at a unique unit vector (upto scalar multiplication) are dense in $K(V_1, V_2)$.

In this paragraph, $\hc$ is a separable Hilbert space. Schatten \cite{Schatten} proved that $D[0,1]$ in $K(\hc)$  has no extreme points. In \cite{Holub}, the geometry of $K(\hc)$ and its dual $\mathscr B_1(\hc)$, the \emph{trace class}, was studied by characterizing the smooth points and extreme points of their closed unit balls. It was shown in \cite[Corollary 3.1]{Holub} that the trace class operators of rank one and unit norm  are exactly the extreme points of $D[0,1]$ in  $\mathscr B_1(\hc)$. The space $\mathscr B_1(\hc)$ is predual of $\mathscr B(\hc)$ and hence is isometrically isomorphic to a subspace of $\mathscr B(\hc)^*$. An interesting result in \cite[Corollary 3.3]{Abatzoglou} is that all the trace class operators of rank one and unit norm  are also extreme points of $D[0,1]$ in $\mathscr B(\hc)^* $. In \cite{Heinrich}, this study was continued to understand the geometry of  $K(V_1, V_2)$, $\mathscr B(V_1, V_2)$ and the weak tensor product of $V_1$ and $V_2$, where  $V_1$ and $V_2$ are Banach spaces. Characterizations of Gateaux differentiability and Fr\'echet differentiability of the norm at an operator  $T$ in these spaces were obtained. For Schatten classes of $\hc$, this problem was addressed in \cite[Theorem 2.2, Theorem 2.3]{Abatzoglou}. In \cite[Theorem 3.1]{Abatzoglou}, another characterization of Fr\'echet differentiability of the norm at $T$ in $\mathscr B(\hc)$  was given, an alternative proof  of which can be found in \cite[Theorem 4.6]{Sain 1}. In \cite[Corollary 2.2]{Heinrich}, a necessary and sufficient condition for $0\neq T\in K(V_1, V_2)$ to be a smooth point is obtained, where $V_1$ is a reflexive Banach space and $V_2$  is any Banach space. It is shown that such a $T$ is a smooth point if and only if $T$ attains its norm on the unique unit vector $x_0$ (up to a scalar factor) and $T x_0$ is a smooth point. (This was proved for $K(\hc)$ in \cite[Theorem 3.3]{Holub}.)  Recently, as an application of orthogonality, it was shown in  \cite[Theorem 4.1, Theorem 4.2]{Sain 1} that this characterization also holds when $V_2$ is any normed space (not necessarily complete). 

{If $T\in \mathscr B(V_1, V_2)$ attains its norm on the unique unit vector $x_0$ (up to a scalar factor) and $T x_0$ is a smooth point of $V_2$, then $T$ is said to satisfy \emph{Holub's condition} (see \cite{Hennefeld}). Then Theorem 4.1 and Theorem 4.2 in \cite{Sain 1} say that for a reflexive Banach space $V_1$ and any normed space $V_2$, smooth points of $K(V_1, V_2)$ are exactly those operators which satisfy Holub's condition. This characterization may not hold if $T$ is not compact (see \cite[Example (a)]{Hennefeld}) or when $V_1$ is not a reflexive space (see \cite[Example (b), Example (c)]{Hennefeld}). In the case when $V_1$ is not a reflexive space, usually some extra condition is needed along with Holub's condition to characterize smooth points. For example, Corollary 1 in \cite{Yonuis 1} states that for $1<p,r<\infty$, a necessary and sufficient condition for $T\in\mathscr B(l^p, l^r)$ to be a smooth point is that $T$ satisfies Holub's condition and $\dist(T, K(l^p, l^r))<\|T\|$. As an application of orthogonality, it is proved in \cite[Theorem 4.5]{Approx 4} that for any normed spaces $V_1, V_2$, if $T\in\mathscr B(V_1, V_2)$ attains its norm and is a smooth point, then $T$ satisfies Holub's condition and $\dist(T, K(V_1, V_2))<\|T\|$. The converse is true when $V_1$ is a reflexive Banach space and $V_2$ is any Banach space and $K(V_1, V_2)$ is an $M$-ideal in  $\mathscr B(V_1, V_2)$ (see \cite[Theorem 4.6]{Approx 4}). It is an open question whether or not these extra assumptions on $V_1$ and $V_2$ are required. Some sufficient conditions,  along with Holub's condition,  for an operator to be smooth are also known when the underlying field is $\bR$. If $V_1$ is a real Banach space and $V_2$ is a real normed space, one such condition for smooth points in $\mathscr B(V_1, V_2)$ is given in \cite[Theorem 4.3]{Sain 1}. When $V_1$ and $V_2$ are any real normed spaces, such  conditions are given in \cite[Theorem 3.2]{Sain 2} and \cite[Theorem 3.4]{new}. The extra condition which along with Holub's condition gives the characterization for smoothness of any non zero norm attaining operator $T\in\mathscr B(V_1, V_2)$  (for any real normed spaces $V_1, V_2$) is obtained in \cite[Theorem 3.3]{new}.   For further study of smooth points, we refer the readers to \cite{Yonuis 2, Kittaneh,  Rao 1, Rao 2, Rao 3, werner}.}

Extreme points of $D[0,1]$ are important because of Krein-Milman theorem. Along with the extreme points, the faces of $D[0,1]$ in any normed space have also been of interest.  (Note that the extreme points are exactly faces with a single element.) Let $M_n(\bR)$ or $M_n(\bC)$ be equipped with any \emph{unitarily invariant norm}, $|||\cdot|||$ (that is, for any matrix $A$  and $U,U'$ unitary, $|||UAU'|||=|||A|||$). Then there is a unique \emph{symmetric gauge function} $\Phi$ on $\bR^n$ such that $|||A|||=\Phi((s_1(A), \ldots, s_n(A))$, where $s_i(A)$ are singular values of $A$ arranged as $s_1(A)\geq \cdots\geq s_n(A)$. Zi\k{e}tak \cite[Theorem 5.1]{Zietak 1} showed that a  necessary and sufficient condition for a matrix $A$ to be an  extreme point of the closed unit ball in $(M_n(\bR), |||\cdot|||)$ is that $(s_1(A),\ldots,s_n(A))$ is an extreme point of the closed unit ball in $(\bR^n, \Phi)$. This result was extended to $M_n(\bC)$ in \cite[Theorem 1]{So} (these results also follow from the results in \cite{Arazy}). Li and Schneider  \cite[Proposition 4.1]{Li Schneider} characterized the extreme points of $D[0,1]$ in $M_n(\bR)$ and $M_n(\bC)$, equipped with the dual of an induced norm. 
In $\mathscr B(\hc)$, the extreme points of $D[0,1]$ are exactly the isometries and the coisometries (see \cite[p. 263]{Halmos}). It was proved in \cite[Theorem 2.5]{extreme} that $A\in\mathscr B(\hc)$ is an isometry or a coisometry if and only if $\|A\| = 1$ and  $A$ is \emph{right symmetric} (for definition, see \cite{Sain 5}). So the extreme points of $D[0,1]$ in $\mathscr B(\hc)$ are precisely those operators which are of unit norm and are also right symmetric.
There is also a concept of a \emph{left symmetric operator}, the study of which can be found in  \cite{Sain 5, Symmetric 1, Symmetric, Wrong 1}.

Theorem 2, Theorem 3 and Theorem 4 in \cite{So} give characterizations of  proper closed faces in $M_n(\bC)$, equipped with Schatten p-norms. {Theorem 4.1 in \cite{Zietak} and     the discussion above it give a characterization of faces of $D[0,1]$ in $(M_n(\bC),|||\cdot|||)$ as follows: $F$ is a face of $(M_n(\bC),|||\cdot|||)$ if and only if there exists $A\in M_n(\bC)$  such that  $F$ is a face of $\partial |||A|||^*$, the \emph{subdifferential set} of $|||\cdot|||^*$ at $A$, where $|||\cdot|||^*$ is the dual norm of $|||\cdot|||$.
In a normed space $V$, the {subdifferential} set of a continuous convex function $g:V\rightarrow \bR$ at $v\in V$ is denoted by $\partial g(v)$, and  is defined as the set of bounded linear functionals $f \in V^*$ satisfying the below condition: 
\begin{equation*}g(u)-g(v)\geq \text{Re } f(u-v) \quad \text{for all } u\in V.\label{eq 1.3.1}\end{equation*} It is a non-empty weak* compact convex subset of $V^*$. The below two propositions are easy to check. We refer the readers to \cite{thesis, hiriart} for more details. 
\begin{prop}\label{p1}
Let $v\in V$. Then
\begin{equation*}
\partial \|v\|=\{f\in V^*: \text{Re }f(v)=\|v\|, \|f\|\leq1\}.\label{eq 1.3.12}
\end{equation*}
\end{prop}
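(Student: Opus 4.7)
The plan is to prove both inclusions directly from the defining inequality of the subdifferential, using the standard trick of plugging in well-chosen test vectors to extract the two defining conditions.

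For the forward inclusion, suppose $f \in \partial \|v\|$, so that $\|u\| - \|v\| \geq \text{Re } f(u-v)$ for every $u \in V$. First I would extract $\text{Re } f(v) = \|v\|$ by substituting $u = 0$ and $u = 2v$ to get the two opposite inequalities. Then, to obtain $\|f\| \leq 1$, I would substitute $u = v + tw$ for an arbitrary $w \in V$ and a scalar $t > 0$, divide by $t$, and use the triangle inequality $\|v + tw\| - \|v\| \leq t\|w\|$ to conclude $\text{Re } f(w) \leq \|w\|$. In the complex case one replaces $w$ by $\lambda w$ for an appropriate unimodular $\lambda$ to pass from the real part to the modulus, giving $\|f\| \leq 1$.

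For the reverse inclusion, suppose $\text{Re } f(v) = \|v\|$ and $\|f\| \leq 1$. Then for any $u \in V$ I would simply write
\begin{equation*}
\text{Re } f(u - v) = \text{Re } f(u) - \|v\| \leq |f(u)| - \|v\| \leq \|u\| - \|v\|,
\end{equation*}
which is exactly the defining condition for $f \in \partial \|v\|$.

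There is no genuine obstacle here; the proof is essentially a bookkeeping exercise. The only mild subtlety is taking care of the complex case when converting the real-part bound $\text{Re } f(w) \leq \|w\|$ into the norm bound $\|f\| \leq 1$, which requires the standard rotation-by-a-unimodular-scalar argument rather than just replacing $w$ by $-w$ as would suffice in the real case.
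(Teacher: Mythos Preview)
Your proof is correct and is exactly the standard direct verification from the definition. The paper does not actually supply a proof of this proposition; it simply remarks that it is ``easy to check'' and refers the reader to \cite{thesis, hiriart}, so your argument is in line with what is intended.
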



In particular, for  $A\in M_n(\bC)$, 
$$\partial |||A|||=\{G\in M_n(\bC): \text{Re } \tr(G^* A)=|||A|||, |||G|||^*\leq 1\}.$$
\begin{prop}\label{p2} Let $u,v\in V$. Then we have  $$\lim\limits_{t\rightarrow 0^+} \dfrac{\norm{v+tu} - \norm{v}}{t} = \max\{\text{Re }f(u) : f\in V^*, \norm{f} = 1, f(v) = \norm{v}\}.$$\end{prop}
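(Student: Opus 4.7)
The plan is to prove both the ``$\geq$'' and ``$\leq$'' inequalities in the claimed equality, with the ``$\leq$'' direction also delivering attainment of the maximum. Note first that by convexity of $t\mapsto \norm{v+tu}$, the difference quotient $(\norm{v+tu}-\norm{v})/t$ is non-decreasing in $t>0$, so the right-hand limit exists as the infimum over $t>0$; denote it by $L$. The main tools will be the Hahn-Banach theorem (to produce norming functionals) and Banach-Alaoglu (to extract a limit point).

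For the lower bound on $L$, given any $f\in V^*$ with $\norm{f}=1$ and $f(v)=\norm{v}$, I would invoke the trivial estimate
\begin{equation*}
\norm{v+tu}\geq \text{Re }f(v+tu)=\norm{v}+t\,\text{Re }f(u)
\end{equation*}
for $t>0$. Dividing by $t$ and letting $t\to 0^+$ yields $L\geq \text{Re }f(u)$; hence $L$ is at least the supremum on the right-hand side.

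For the upper bound and attainment, the idea is to use a family of norming functionals for the perturbed vectors. By Hahn-Banach, for each $t>0$ pick $f_t\in V^*$ with $\norm{f_t}=1$ and $f_t(v+tu)=\norm{v+tu}$. Combining this identity with the estimate $\text{Re }f_t(v)\leq \norm{v}$ gives
\begin{equation*}
\frac{\norm{v+tu}-\norm{v}}{t}\leq \text{Re }f_t(u).
\end{equation*}
By Banach-Alaoglu, the net $(f_t)_{t>0}$ has a weak-$*$ cluster point $f_0$ in the closed unit ball of $V^*$ as $t\to 0^+$. Rewriting $f_t(v)=\norm{v+tu}-t f_t(u)$ and using the uniform bound $|f_t(u)|\leq\norm{u}$, I would verify that $f_0(v)=\norm{v}$ along the appropriate subnet. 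Together with $\norm{f_0}\leq 1$, this forces $\norm{f_0}=1$ when $v\neq 0$; the case $v=0$ reduces directly to Hahn-Banach applied to $u$. Passing to the limit along this subnet in the displayed inequality yields $L\leq \text{Re }f_0(u)$, so the supremum is attained at $f_0$ and both inequalities coincide.

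The main technical obstacle is the use of nets in place of sequences, since $V$ is not assumed separable, together with the careful verification that the weak-$*$ cluster point $f_0$ genuinely lies in the admissible set $\{f\in V^*:\norm{f}=1,\ f(v)=\norm{v}\}$; in particular, one must ensure both that $\text{Re }f_0(v)=\norm{v}$ (from the limit computation) and that $\norm{f_0}$ does not drop below $1$ in the weak-$*$ limit.
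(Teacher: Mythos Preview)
Your proof is correct. The paper itself does not give a proof of this proposition; it simply states that it is ``easy to check'' and refers the reader to \cite{thesis, hiriart}. The intended route there is the standard convex-analysis fact that the one-sided directional derivative of a convex function equals the support function of its subdifferential, combined with the identification of $\partial\norm{v}$ in Proposition~\ref{p1}. Your argument is the direct, self-contained version of that same fact for the norm: the lower bound from any norming functional, and the upper bound plus attainment via Hahn--Banach applied to $v+tu$ followed by a Banach--Alaoglu compactness argument. One minor remark: you do not actually need nets. Since the difference quotient is monotone in $t$, it suffices to work along any sequence $t_n\downarrow 0$; then you only need a weak-$*$ cluster point of the countable family $(f_{t_n})$, which Banach--Alaoglu provides via subnets regardless of separability, and your verifications that $f_0(v)=\norm{v}$ and $L\leq \text{Re }f_0(u)$ go through unchanged along that subnet because the full limits on the left-hand sides already exist.
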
 
Using this, Watson \cite[Theorem 4]{Watson} gave a characterization of $\partial |||\cdot|||$ in the space $(M_n(\bR), |||\cdot|||)$. Zi\k{e}tak \cite[Theorem 3.1, Theorem 3.2]{Zietak} improved this result and showed the following.

\begin{theorem}(\cite[Theorem 3.1, Theorem 3.2]{Zietak})  For $A\in M_n(\bC)$,
\begin{align*}\partial |||A||| &= \{ U \text{diag}(d_1,\ldots,d_n) U'^*: A=U \Sigma U'^* \text{ is a singular value}\\
& \text{ decomposition } of A,
 \sum s_i(A) d_i=|||A|||=\Phi((s_1,\ldots, s_n)),\\ &\ \Phi^*((d_1,\ldots, d_n))=1\}.\end{align*}\
\end{theorem}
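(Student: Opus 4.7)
The plan is to invoke Proposition \ref{p1}, which specialized to the unitarily invariant norm $|||\cdot|||$ on $M_n(\bC)$ gives
$$\partial|||A||| = \{G \in M_n(\bC) : |||G|||^* \leq 1,\ \mathrm{Re}\,\tr(G^*A) = |||A|||\}.$$
I would then match this with the claimed description by combining von Neumann's trace inequality with Hölder's inequality for the symmetric gauge pair $(\Phi, \Phi^*)$.

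For the inclusion $\supseteq$, I would take $G = U\,\mathrm{diag}(d_1,\ldots,d_n)\,U'^*$ satisfying the three conditions on the right, with $A = U\Sigma U'^*$ an SVD. A direct computation using $U^*U = U'^*U' = I$ gives $\tr(G^*A) = \tr(\mathrm{diag}(d_1,\ldots,d_n)^*\,\Sigma) = \sum_i d_i\, s_i(A) = |||A|||$, which is real. The singular values of $G$ are $|d_i|$ (in decreasing order), so invariance of $\Phi^*$ under permutations and sign changes yields $|||G|||^* = \Phi^*((|d_1|,\ldots,|d_n|)) = \Phi^*((d_1,\ldots,d_n)) = 1$. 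Hence $G \in \partial|||A|||$.

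For the inclusion $\subseteq$, take $G \in \partial|||A|||$. The main chain is
$$|||A||| = \mathrm{Re}\,\tr(G^*A) \leq |\tr(G^*A)| \leq \sum_{i=1}^n s_i(G) s_i(A) \leq \Phi^*((s_1(G),\ldots,s_n(G)))\,\Phi((s_1(A),\ldots,s_n(A))) = |||G|||^*\cdot|||A||| \leq |||A|||.$$
Every inequality must therefore be an equality. The final inequality forces $|||G|||^* = 1$. Equality in von Neumann's trace inequality (the middle step) together with equality in $\mathrm{Re}\,\tr(G^*A) = |\tr(G^*A)|$ produces unitaries $U, U'$ and a diagonal matrix $D = \mathrm{diag}(d_1,\ldots,d_n)$ such that $A = U\Sigma U'^*$ is a bona fide SVD and $G = U D U'^*$; the passage from $|\tr|$ to $\mathrm{Re}\,\tr$ lets one absorb phases into $U$ so that the $d_i$ can be chosen real, and then equality in Hölder forces $d_i \geq 0$ wherever $s_i(A) > 0$ and $\sum d_i\, s_i(A) = |||A|||$. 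Finally $\Phi^*((d_1,\ldots,d_n)) = |||G|||^* = 1$.

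The main difficulty lies in the equality analysis of von Neumann's trace inequality: one must identify common unitary factors $U, U'$ for which $A = U\Sigma U'^*$ remains a valid SVD (with $\Sigma$ nonnegative) and simultaneously $G = UDU'^*$ with $D$ diagonal, and then arrange phases so that $D$ has real diagonal entries without disturbing $\Sigma$. The invariance of $\Phi^*$ under sign changes is the ingredient that makes the resulting real (possibly signed) $d_i$ fit seamlessly into the description of $|||G|||^*$.
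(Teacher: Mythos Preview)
The paper does not prove this theorem; it is quoted from \cite{Zietak}, with pointers to Watson's original argument via Proposition~\ref{p2} and an alternative proof in \cite{watson93}. There is therefore no in-paper proof to compare against directly.

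Your strategy---specialize Proposition~\ref{p1}, then squeeze via von Neumann's trace inequality and H\"older for the dual pair $(\Phi,\Phi^*)$---is the standard one and is essentially how Zi\k{e}tak proceeds. The inclusion $\supseteq$ is clean as you write it. For $\subseteq$, your inequality chain is correct and forces equality throughout; the substantive work, as you yourself flag, is the equality case of von Neumann's inequality, which must produce \emph{common} unitaries $U,U'$ putting $A$ into SVD form while simultaneously diagonalizing $G$. This step is genuinely delicate (repeated singular values and the kernel of $A$ require separate handling, and one must track why the $d_i$ can be taken real and arranged so that $\Phi^*((d_i))$ recovers $|||G|||^*$), and your sketch stops short of executing it. A complete proof would either invoke a precise statement of the von Neumann equality case or work through the block decomposition explicitly; apart from that acknowledged gap, the plan is correct and matches the literature.
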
  
In \cite[Theorem 1]{watson93}, the above result was proved using a different approach. 
For the operator norm $\|\cdot\|$ on $M_n(\bC)$, we have the following.
\begin{cor}(\cite[Example 3]{watson93})\label{cor 2.9} For $A\in M_n(\bC)$,
$$\partial \|A\|=\text{conv }\{uv^*:\|u\|=\|v\|=1, Av=\|A\|u\},$$ where $\text{conv } S$ denotes the convex hull of a set $S$.
\end{cor}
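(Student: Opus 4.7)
The plan is to derive this as an immediate specialization of the preceding theorem of Zi\k{e}tak, taking $|||\cdot|||=\|\cdot\|$. For the operator norm, the associated symmetric gauge function is $\Phi(x_1,\ldots,x_n)=\max_i |x_i|$ and its dual is $\Phi^*(x_1,\ldots,x_n)=\sum_i |x_i|$. Under this choice, that theorem reads: $G\in\partial\|A\|$ if and only if there exist a singular value decomposition $A=U\Sigma U'^*$ with singular values $s_1(A)\geq\cdots\geq s_n(A)$ and scalars $d_1,\ldots,d_n$ satisfying $\sum_i s_i(A)d_i=\|A\|$ and $\sum_i |d_i|=1$, such that $G=U\,\text{diag}(d_1,\ldots,d_n)\,U'^*$.

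The crux is to extract the shape of $(d_i)$ from these constraints. Since $\|A\|=s_1(A)$, I would chain
\[
\|A\|=\Bigl|\sum_i s_i(A)d_i\Bigr|\leq\sum_i s_i(A)|d_i|\leq\|A\|\sum_i|d_i|=\|A\|,
\]
forcing equality throughout. Saturation of the outer triangle inequality, together with the sum being the positive real $\|A\|$, forces each nonzero $s_i(A)d_i$ to be real and positive, so $d_i\geq 0$ whenever $s_i(A)>0$. Saturation of the inner inequality forces $d_i=0$ whenever $s_i(A)<\|A\|$. Letting $r$ be the multiplicity of $s_1(A)$ and writing $U=[u_1|\cdots|u_n]$, $U'=[v_1|\cdots|v_n]$, it follows that $G=\sum_{i=1}^{r} d_i u_i v_i^*$ with $d_i\geq 0$, $\sum_{i=1}^{r} d_i=1$, each $u_i, v_i$ of unit norm, and $Av_i=s_i(A)u_i=\|A\|u_i$. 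This identifies $G$ as a convex combination of the claimed rank-one matrices.

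For the reverse inclusion, I would appeal directly to Proposition \ref{p1}: the dual of $\|\cdot\|$ is the trace norm, under which $\|uv^*\|^*=1$ whenever $\|u\|=\|v\|=1$, and $\tr((uv^*)^*A)=u^*Av=\|A\|$ when $Av=\|A\|u$; so each rank-one generator lies in $\partial\|A\|$, and its entire convex hull does too by convexity of the subdifferential. The main obstacle is the equality analysis above, where one must argue carefully that triangle-inequality saturation together with positivity of the target sum pins the $d_i$ down to \emph{nonnegative reals}, rather than merely complex scalars with the correct moduli.
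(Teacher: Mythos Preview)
Your derivation is correct and matches the paper's intended route: the corollary is stated immediately after Zi\k{e}tak's theorem and is meant to be read as the specialization $\Phi=\max$, $\Phi^*=\ell^1$, exactly as you carry out. The paper gives no separate proof.

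One clarification that dissolves the ``main obstacle'' you flag: in Zi\k{e}tak's theorem the tuple $(d_1,\ldots,d_n)$ already lies in $\bR^n$, since $\Phi^*$ is a symmetric gauge function on $\bR^n$; any complex phase in a diagonal entry can be absorbed into a column of $U$ or $U'$ without disturbing the SVD. So the saturation analysis only needs to rule out negative $d_i$, which your second inequality (forcing $d_i=0$ whenever $s_i(A)<\|A\|$) and first inequality (forcing $d_i\ge 0$ whenever $s_i(A)>0$) already do. Note also that your chain tacitly assumes $\|A\|>0$; for $A=0$ the statement reduces to the standard fact that the trace-norm unit ball is the convex hull of the unit-norm rank-one matrices.
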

Along the similar lines of \cite{Watson} (that is, by using Proposition \ref{p2}), the subdifferential set of the \emph{Ky Fan $k$-norms}, $\|\cdot\|_{(k)}$, on $M_n(\bC)$ was obtained in \cite{2017}. In \cite{2013,2014,thesis, 2017}, the subdifferential set was used to obtain characterizations of orthogonality in $M_n(\bC)$, equipped with various norms.

Actually the right hand derivative has a deeper connection with orthogonality as explored by Ke$\check{\text c}$ki$\acute{\text c}$ \cite{Keckic 2}, where the author introduced the notion of \emph{$\varphi$-Gateaux derivatives}: for $u,v\in V$ and $\varphi\in [0,2\pi)$, the {$\varphi$-Gateaux derivative} of norm at $v$ in the direction $u$ is defined  as $$D_{\varphi, v}(u) = \lim\limits_{t\rightarrow 0^+} \dfrac{\norm{v+te^{\iota\varphi}u} - \norm{v}}{t}.$$
These always exist for any two vectors $u$ and $v$ (see \cite[Proposition 1.2]{Keckic 2}). A characterization of orthogonality follows.
\begin{theorem}\label{x}(\cite[Theorem 1.4]{Keckic 2}) Let $u,v\in V$. Then $v$ is orthogonal to $u$ if and only if $$\inf\limits_{0\leq\varphi \leq 2\pi} D_{\varphi, v}(u) \geq 0.$$\end{theorem}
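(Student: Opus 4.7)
The plan is to establish both implications via the convexity of the function $t\mapsto\|v+te^{\iota\varphi}u\|$, which is the natural bridge between the infinitesimal quantity $D_{\varphi,v}(u)$ and the global inequality defining Birkhoff-James orthogonality.

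For the forward direction, I would start from the definition: assuming $v$ is orthogonal to $u$, we have $\|v\|\leq\|v+ku\|$ for every scalar $k$. Specializing to $k=te^{\iota\varphi}$ with $t>0$ and arbitrary $\varphi\in[0,2\pi]$ gives $\|v+te^{\iota\varphi}u\|-\|v\|\geq 0$; dividing by $t$ and letting $t\to 0^+$ yields $D_{\varphi,v}(u)\geq 0$ for every $\varphi$, hence $\inf_{\varphi}D_{\varphi,v}(u)\geq 0$.

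For the converse, the key observation is that $g_\varphi(t):=\|v+te^{\iota\varphi}u\|$ is a convex function of $t\in\bR$ (an immediate consequence of the triangle inequality). A standard fact about convex functions is that the forward difference quotient $t\mapsto(g_\varphi(t)-g_\varphi(0))/t$ is non-decreasing on $(0,\infty)$, so it dominates its right derivative at $0$. Thus for every $t>0$,
\[
\frac{\|v+te^{\iota\varphi}u\|-\|v\|}{t}\;\geq\;D_{\varphi,v}(u)\;\geq\;0.
\]
Consequently $\|v+te^{\iota\varphi}u\|\geq\|v\|$ for every $t\geq 0$ and every $\varphi\in[0,2\pi]$. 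Since an arbitrary scalar $k$ (in $\bR$ or $\bC$) may be written as $te^{\iota\varphi}$ for some $t\geq 0$ and $\varphi\in[0,2\pi]$, we obtain $\|v+ku\|\geq\|v\|$ for every scalar $k$, i.e., $v$ is orthogonal to $u$.

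There is no substantive obstacle. The only non-definitional ingredient is the monotonicity of the forward difference quotient of a convex function; the subtlety, such as it is, lies in recognising that this monotonicity is exactly what converts a one-sided infinitesimal lower bound (the non-negativity of the $\varphi$-Gateaux derivative in every direction) into the full global lower bound $\|v+ku\|\geq\|v\|$ required by Birkhoff-James orthogonality.
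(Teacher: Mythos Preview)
Your proof is correct. The paper itself does not supply a proof of this theorem; it is quoted as \cite[Theorem 1.4]{Keckic 2}, and your argument via the convexity of $t\mapsto\|v+te^{\iota\varphi}u\|$ and the monotonicity of its forward difference quotient is precisely the standard route used in that reference.
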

In \cite[Theorem 2.4]{Keckic}, the expression for the $\varphi$-Gateaux derivative of the  norm  on $\mathscr B(\hc)$ was obtained. 
Using the above  proposition, a characterization of orthogonality in $\mathscr B(\hc)$ was given in \cite[Corollary 3.1]{Keckic}, which was first proved in \cite{Bhatia} using a completely different approach. This characterization of orthogonality  and many of its generalizations are the main content of the next section.

\section{Characterizations and applications of  orthogonality}\label{Sec3}

Bhatia and $\check{\text S}$emrl  \cite{Bhatia} gave characterizations of orthogonality in $\mathscr B(\hc)$ in terms of orthogonality of vectors in the underlying Hilbert space $\hc$. These are given in the next two theorems. An independent proof of Theorem \ref{3} was also given by Paul \cite{Radii}.

\begin{theorem}\label{2}(\cite[Theorem 1.1]{Bhatia}) Let $A, B\in M_n(\bC)$. Then $A$ is orthogonal to $B$ if and only if there exists a unit vector $x\in\bC^n$ such that $\norm{Ax} = \norm{A}$ and $\bra Ax | Bx\ket = 0$.\end{theorem}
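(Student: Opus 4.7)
The easy direction is the ``if'': assuming such an $x$ exists, for any scalar $k$ one has
\[
\|A+kB\|^2 \ge \|(A+kB)x\|^2 = \|Ax\|^2 + 2\operatorname{Re}\bigl(\bar k \langle Ax,Bx\rangle\bigr) + |k|^2\|Bx\|^2 = \|A\|^2 + |k|^2\|Bx\|^2 \ge \|A\|^2,
\]
so $A\perp B$. I would dispose of this direction in one line and spend the rest of the argument on the converse.

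For the converse, my plan is to combine Theorem \ref{x} (the $\varphi$-Gateaux derivative criterion) with the Toeplitz--Hausdorff theorem. First, I would compute the $\varphi$-Gateaux derivative of the operator norm at $A$ in direction $B$. Using the compactness of the unit sphere in $\bC^n$ and the expansion $\|(A+te^{\iota\varphi}B)x\|^2 = \|Ax\|^2 + 2t\operatorname{Re}(e^{\iota\varphi}\langle Bx,Ax\rangle)+t^2\|Bx\|^2$ for $x$ in the norm-attaining set $M_A=\{x\in\bC^n:\|x\|=1,\ \|Ax\|=\|A\|\}$, one obtains
\[
D_{\varphi,A}(B) \;=\; \frac{1}{\|A\|}\,\max_{x\in M_A}\operatorname{Re}\!\bigl(e^{\iota\varphi}\langle Bx,Ax\rangle\bigr).
\]
Combining this with Theorem \ref{x}, the hypothesis $A\perp B$ translates to: for every $\varphi\in[0,2\pi)$ there exists $x_\varphi\in M_A$ with $\operatorname{Re}\bigl(e^{\iota\varphi}\langle Ax_\varphi,Bx_\varphi\rangle\bigr)\le 0$, i.e.\ the set $K:=\{\langle Ax,Bx\rangle:x\in M_A\}\subseteq\bC$ meets every closed half-plane through $0$.

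The crucial convexity step comes next. Since $M_A$ consists of the unit vectors in $N_A$, the eigenspace of $A^*A$ for its largest eigenvalue $\|A\|^2$, one has $\langle Ax,Bx\rangle=\langle B^*Ax,x\rangle$ for $x\in N_A$, and this is exactly the numerical range of the compression $PB^*AP$ of $B^*A$ to the subspace $N_A$ (where $P$ is the orthogonal projection onto $N_A$). The Toeplitz--Hausdorff theorem then yields that $K$ is a convex (and clearly compact) subset of $\bC$. A convex compact set which meets every closed half-plane through the origin must contain the origin, by the Hahn--Banach separation theorem: otherwise, strict separation of $\{0\}$ from $K$ would produce a closed half-plane through $0$ disjoint from $K$. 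Hence $0\in K$, which is exactly the desired $x\in M_A$ with $\langle Ax,Bx\rangle=0$.

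The main obstacle is the convexity of $K$: without it, ``meets every half-plane through $0$'' does not imply ``contains $0$'' (for example, a circle around the origin is compact and connected but does not contain $0$). I would spend most of the writeup carefully identifying $K$ with the numerical range of the compression of $B^*A$ to $N_A$, so that Toeplitz--Hausdorff applies cleanly. A secondary technical point is justifying the expression for $D_{\varphi,A}(B)$: here one needs a compactness argument showing that norm-attaining vectors of $A+te^{\iota\varphi}B$ converge (along subsequences as $t\to 0^+$) to points of $M_A$, which is available in the finite-dimensional setting of $M_n(\bC)$.
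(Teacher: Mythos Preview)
Your argument is correct and is essentially the original Bhatia--\v{S}emrl proof that the paper cites: the heart of the matter is identifying $K=\{\langle Ax,Bx\rangle:x\in M_A\}$ with the numerical range of the compression of $B^*A$ to the top eigenspace of $A^*A$, invoking Toeplitz--Hausdorff for convexity, and then separating. The paper itself does not reprove the theorem but surveys several routes; your write-up packages the ``$K$ meets every half-plane through $0$'' step via Theorem~\ref{x} (the Ke\v{c}ki\'c $\varphi$-Gateaux criterion), whereas Bhatia--\v{S}emrl obtain the same half-plane condition directly from the inequality $\|A+kB\|\ge\|A\|$ expanded at vectors in $M_A$. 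The two routes are equivalent here, and the paper also mentions a third approach (via the subdifferential, Corollary~\ref{cor 2.9}) that avoids Toeplitz--Hausdorff altogether.

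One cosmetic slip: after translating $D_{\varphi,A}(B)\ge 0$ you write ``there exists $x_\varphi\in M_A$ with $\operatorname{Re}(e^{\iota\varphi}\langle Ax_\varphi,Bx_\varphi\rangle)\le 0$''; the inequality should be $\ge 0$ (you silently conjugated when passing from $\langle Bx,Ax\rangle$ to $\langle Ax,Bx\rangle$). This does not affect the conclusion, since ranging over all $\varphi$ the half-planes $\{\operatorname{Re}(e^{\iota\varphi}z)\ge 0\}$ and $\{\operatorname{Re}(e^{\iota\varphi}z)\le 0\}$ sweep out the same family, but it is worth correcting.
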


Let $\norm{\cdot}$ be any norm on $\bC^n$ or $\bR^n$ and let $\|\cdot\|'$ be the corresponding
{induced norm}  on $M_n(\bC)$ or $M_n(\bR)$, respectively.
It was conjectured in \cite[Remark 3.3]{Bhatia} that  a matrix $A$ is orthogonal to another matrix $B$ in $(M_n(\bC), \norm{\cdot}')$ if and only if there exists a unit vector $x\in\bC^n$ such that $\norm{Ax} = \norm{A}'$ and $Ax$ is orthogonal to $Bx$ in $(\bC^n,\norm{\cdot})$. Li and Schneider \cite[Example 4.3]{Li Schneider}  gave an example  to show that the conjecture is false in $M_n(\bC)$ as well as in $M_n(\bR)$. In $(M_n(\bC),\|\cdot\|')$ (or $(M_n(\bR),\|\cdot\|')$), a matrix $A$ is said to satisfy \emph{B$\check{\text S}$ property} if for any matrix $B$, whenever $A$ is orthogonal to $B$, there exists a unit vector $x$ such that $\norm{Ax} = \norm{A}'$ and $Ax$ is orthogonal to $Bx$ in $(\bC^n, \|\cdot\|)$ (or $(\bR^n, \|\cdot\|)$) (see \cite[Definition 1.1]{Sain 3}). 
It was proved in \cite{Ben} that  $(\bR^n, \norm{\cdot})$ is an inner product space if and only if every $A\in M_n(\bR)$ satisfies  B$\check{\text S}$ property. In \cite[Theorem 2.2]{Sain 3}, it was shown that if $(\bR^n, \norm{\cdot})$ is a smooth space and $A\in M_n(\bR)$  is such that $\{x\in\bR^n : \norm{x} =1, \norm{Ax} = \norm{A}'\}$ is a countable set with more than two points, then $A$ does not satisfy  B$\check{\text S}$ property.  Example 4.3 in \cite{Li Schneider} for $M_n(\bR)$ is a special case of this.
It was shown in  \cite[Corollary 2.1.1]{Sain 4} that  if $A\in M_n(\bR)$ attains its norm at exactly two points, then $A$ satisfies B$\check{\text S}$ property. A generalization of this theorem can be found in \cite[Theorem 3.1]{last}. In \cite[Theorem 2.1]{Sain 4}, another sufficient condition for  $A$ to satisfy B$\check{\text S}$ property was given.
If $(\bR^n, \norm{\cdot})$ is a strictly convex space, then the collection of the matrices which satisfy B$\check{\text S}$ property are dense in $M_n(\bR)$ (see \cite[Theorem 2.6]{Sain 3}). 
\begin{theorem}\label{3}(\cite[Remark 3.1]{Bhatia}, \cite[Lemma 2]{Radii}) Let $\hc$ be a complex Hilbert space. Let $A, B\in \mathscr B(\hc)$. Then $A$ is orthogonal to $B$ if and only if there exists a sequence of unit vectors $h_n\in \hc$ such that $\norm{Ah_n} \rightarrow \norm{A}$ and $\bra Ah_n | Bh_n\ket \rightarrow 0$, as $n\rightarrow \infty$.\end{theorem}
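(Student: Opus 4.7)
The forward implication is a direct calculation. Assuming such a sequence $h_n$ exists, expand
\[
\|(A+kB)h_n\|^2 = \|Ah_n\|^2 + 2\operatorname{Re}(\overline{k}\langle Ah_n | Bh_n\rangle) + |k|^2\|Bh_n\|^2
\]
and combine with $\|A+kB\|^2 \geq \|(A+kB)h_n\|^2$. Passing $n\to\infty$, the first term tends to $\|A\|^2$, the cross term vanishes by hypothesis, and the last is nonnegative; hence $\|A+kB\|\geq\|A\|$ for every scalar $k$.

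For the converse, the plan is to reformulate orthogonality via Theorem \ref{x} and then use a convexity argument. Assume $A\neq 0$ (the case $A=0$ being trivial) and $\|A+\lambda B\|\geq\|A\|$ for every $\lambda\in\bC$. Define
\[
K := \{\omega\in\bC : \text{there exist unit } h_n\in\hc \text{ with } \|Ah_n\|\to\|A\| \text{ and } \langle Ah_n | Bh_n\rangle \to \omega\}.
\]
Expanding $\|(A+te^{i\varphi}B)h\|^2 = \|Ah\|^2 + 2t\operatorname{Re}(e^{-i\varphi}\langle Ah|Bh\rangle) + t^2\|Bh\|^2$ and observing that, for small $t>0$, the supremum over unit $h$ is asymptotically concentrated on near-maximizers of $\|A\cdot\|$, one should derive
\[
D_{\varphi, A}(B) = \frac{1}{\|A\|}\sup_{\omega\in K}\operatorname{Re}(e^{-i\varphi}\omega).
\]
By Theorem \ref{x}, orthogonality of $A$ to $B$ then becomes the requirement $\sup_{\omega\in K}\operatorname{Re}(e^{-i\varphi}\omega)\geq 0$ for every $\varphi$. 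Since $K$ is closed (as a set of subsequential limits) and bounded by $\|A\|\|B\|$, hence compact, Hahn--Banach separation in $\bC\cong\bR^2$ translates this condition into $0\in\operatorname{conv}(K)$.

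The conclusion $0\in K$ will follow once $K$ is shown to be convex. Given $\omega_1,\omega_2\in K$ witnessed by sequences $h_n^{(1)},h_n^{(2)}$ and any $t\in(0,1)$, my plan is to construct a sequence realizing $t\omega_1+(1-t)\omega_2$ by exploiting the infinite-dimensionality of $\hc$: select orthonormal approximate $A$-maximizers $\{u_n^{(j)}\}_{j=1}^{N_n}$ (via a Weyl sequence when $\|A\|^2$ lies in the essential spectrum of $A^*A$, or within the top spectral subspace of $A^*A$ otherwise, so that $\langle Au_n^{(i)}|Au_n^{(j)}\rangle\to 0$ for $i\neq j$), and form normalized superpositions $h_n = N_n^{-1/2}\sum_{j=1}^{N_n} \zeta_n^{(j)}u_n^{(j)}$ whose diagonal terms $\langle Au_n^{(j)}|Bu_n^{(j)}\rangle$ can be arranged to average to the desired convex combination. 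The principal obstacle is suppressing the off-diagonal cross terms $\langle Au_n^{(i)}|Bu_n^{(j)}\rangle$ for $i\neq j$, which are a priori only bounded by $\|A\|\|B\|$; this would be accomplished by a random-phase cancellation mechanism in which the $\zeta_n^{(j)}$ are chosen with suitable independent phases so that their expected off-diagonal contribution vanishes and its variance is $O(1/N_n)$, and by coordinating $N_n\to\infty$ with the rate at which the $u_n^{(j)}$ become sharper approximate maximizers. This delicate interpolation step is where the genuinely infinite-dimensional character of the statement enters, going beyond the compactness argument available in the finite-dimensional Theorem \ref{2}.
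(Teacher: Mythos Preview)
The paper does not itself prove Theorem \ref{3}; it cites \cite{Bhatia} and \cite{Radii} and then surveys several independent routes to the result (via the distance formula in \cite[Remark 2.2]{2012}, via $\varphi$-Gateaux derivatives in \cite[Corollary 3.1]{Keckic}, and implicitly via states as in Theorem \ref{4}). Your framework---computing $D_{\varphi,A}(B)$ in terms of the set $K$ and then invoking Theorem \ref{x}---is precisely Ke\v{c}ki\'{c}'s approach, and the formula you quote for $D_{\varphi,A}(B)$ is the content of \cite[Theorem 2.4]{Keckic}. Your reduction from ``$\sup_{\omega\in K}\operatorname{Re}(e^{-i\varphi}\omega)\geq 0$ for all $\varphi$'' to $0\in\operatorname{conv}(K)$, and then to $0\in K$ given convexity, is also correct.

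The genuine gap is your argument for the convexity of $K$. The plan you sketch---manufacturing orthonormal approximate maximizers and suppressing the off-diagonal terms $\langle Au_n^{(i)}|Bu_n^{(j)}\rangle$ by random-phase averaging---is both more delicate than needed and not clearly salvageable: there is no a priori link between the diagonal values $\langle Au_n^{(j)}|Bu_n^{(j)}\rangle$ along an orthonormal family and the targets $\omega_1,\omega_2$, and the variance bound you invoke would at best produce \emph{some} superposition whose diagonal average you then no longer control. The clean route is spectral. With $P_\varepsilon$ the spectral projection of $A^*A$ for $[\|A\|^2-\varepsilon,\|A\|^2]$, every norming sequence for $A$ is asymptotically concentrated in $\mathrm{ran}\,P_\varepsilon$, and one checks that
\[
K=\bigcap_{\varepsilon>0}\overline{W\bigl(P_\varepsilon\,B^*A\,P_\varepsilon\big|_{\mathrm{ran}\,P_\varepsilon}\bigr)},
\]
a nested intersection of closures of ordinary numerical ranges, each convex by the Toeplitz--Hausdorff theorem. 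This set is exactly (a two-operator variant of) Stampfli's \emph{maximal numerical range} \cite{derivation}, whose convexity is the engine behind both the original proofs in \cite{Bhatia,Radii} and the derivation formula \eqref{i}. With convexity of $K$ secured this way, your separation argument finishes the proof.
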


When $\hc$ is an infinite dimensional space, one can't expect to get a single vector $h$ in Theorem \ref{3} such that $\norm{Ah} = \norm{A}$ and $\bra Ah | Bh\ket = 0$. In fact it was proved in \cite[Theorem 3.1]{Sain 1} that for $A\in\mathscr B(\hc)$, the following are equivalent. \\(a) For $B\in\mathscr B(\hc)$, $A$ is orthogonal to $B$ if and only if there exists a unit vector $h\in \hc$ such that $\norm{Ah} =\norm{A}$ and $\bra Ah | Bh\ket = 0$. \\(b) There exists  a finite dimensional subspace $\hc_0$ of $\hc$ such that $$\{h\in\hc : \norm{h} =1 , \norm{A} = \norm{Ah}\} = \{h\in \hc_0: \norm{h} = 1\} \text{ and }\norm{A|_{\hc_0^{\bot}}} < \norm{A}.$$

It was noted in \cite{Bhatia} that Theorem \ref{2}  is equivalent to saying that for $A,B\in M_n(\bC)$, \begin{equation}\label{95} \dist(A, \bC B) = \max\left\{\big|\bra Ax | y\ket\big| : \norm{x} = \norm{y} = 1 \text{ and } y\bot Bx\right\}.\end{equation} It is natural to expect that in the  infinite dimensional case, we should have for $A, B\in B(\hc)$,  \begin{equation}\label{96}\dist(A, \bC B) = \sup\left\{\big|\bra Ax | y\ket\big| : \norm{x} = \norm{y} = 1 \text{ and } y\bot Bx\right\}.\end{equation}  This was indeed shown to be true in \cite{Bhatia} by using the approach given in \cite[p. 207]{Two pages}. We would like to point out that the book \cite{Two pages} deals with only separable spaces. However the arguments can be modified by replacing the  sequence of finite rank operators converging pointwise to the identity operator by a net with this property. Since the same proof as in \cite[p. 207]{Two pages} was used in the proof of Theorem 2.4 of \cite{Wrong 1}, a similar modification is required there too.
 
Later, several authors have used different methods to prove Theorem \ref{3}. One of these techniques was given in \cite[Remark 2.2]{2012}  using  a  different distance formula  \cite[Proposition 2.1]{2012}. Another approach in \cite[Corollary 3.1]{Keckic} uses Theorem \ref{x} and the expression for the $\varphi$-Gateaux derivative of the norm on $\mathscr B(\hc)$ (which is given in \cite[Theorem 2.4]{Keckic}).  Using Theorem \ref{x}, W$\acute{\text o}$jcik \cite{Wojcik} extended Theorem \ref{2}  for compact operators between two reflexive Banach spaces over $\bC$:
\begin{theorem} (\cite[Theorem 3.1]{Wojcik}) Let $V_1$ and $ V_2$ be reflexive Banach spaces over $\bC$. Suppose $A, B\in K(V_1, V_2)$ and $A\neq 0$. Then $A$ is orthogonal to $B$ if and only if $$\min\{\max\{D_{\varphi, Ay}(By) : \norm{y} =1, \norm{Ay} = \norm{A}\} : \varphi\in [0, 2\pi)\}\geq 0 .$$\end{theorem}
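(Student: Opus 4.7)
The plan is to combine Theorem~\ref{x}, which characterises orthogonality of $A$ to $B$ in $K(V_1,V_2)$ by $\inf_{\varphi\in[0,2\pi)}D_{\varphi,A}(B)\geq 0$, with the envelope identity
\begin{equation*}
D_{\varphi,A}(B)\;=\;\max\bigl\{D_{\varphi,Ay}(By):\|y\|=1,\ \|Ay\|=\|A\|\bigr\},\qquad\varphi\in[0,2\pi),
\end{equation*}
which I shall establish. Taking the infimum over $\varphi$ on both sides then yields the theorem, and this infimum is actually a minimum: the $\varphi$-analogue of Proposition~\ref{p2} gives $D_{\varphi,A}(B)=\max\{\text{Re}(e^{i\varphi}\Phi(B)):\Phi\in\partial\|A\|\}$, which is a $2\pi$-periodic Lipschitz function of $\varphi$ (with constant $\|B\|$) and hence attains its minimum on $[0,2\pi]$. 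As preparation I would record that the set $M_A:=\{y\in V_1:\|y\|=1,\ \|Ay\|=\|A\|\}$ is non-empty: pick a norming sequence, extract a weak limit using reflexivity of $V_1$, use compactness of $A$ to pass to norm convergence of the images, and observe that the weak limit necessarily has unit norm.

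The ``$\geq$'' half of the envelope identity is elementary. For any $y\in M_A$,
\begin{equation*}
\|A+te^{i\varphi}B\|\;\geq\;\|(A+te^{i\varphi}B)y\|\;=\;\|Ay+te^{i\varphi}By\|,
\end{equation*}
and since $\|Ay\|=\|A\|$, dividing by $t>0$ and letting $t\to 0^+$ yields $D_{\varphi,A}(B)\geq D_{\varphi,Ay}(By)$.

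The reverse inequality is the main obstacle, and I plan to handle it by extracting an extremal pair $(y_0,g_0)$ as a limit of norm-attaining data. Since $A+te^{i\varphi}B$ is compact and $V_1$ is reflexive, for each small $t>0$ there exists $y_t\in V_1$ with $\|y_t\|=1$ and $\|(A+te^{i\varphi}B)y_t\|=\|A+te^{i\varphi}B\|$, and by Hahn-Banach a unit-norm functional $g_t\in V_2^*$ with $g_t(Ay_t+te^{i\varphi}By_t)=\|A+te^{i\varphi}B\|$. Along any sequence $t_n\to 0^+$, I would pass to a subsequence (using reflexivity of $V_1$ and of $V_2^*$) with $y_{t_n}$ weakly convergent to some $y_0\in V_1$ and $g_{t_n}$ weakly convergent to some $g_0\in V_2^*$; compactness of $A$ and $B$ upgrades these to the norm convergences $Ay_{t_n}\to Ay_0$ and $By_{t_n}\to By_0$, whence a short verification produces $y_0\in M_A$, $\|g_0\|=1$, and $g_0(Ay_0)=\|Ay_0\|$. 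Taking real parts in the defining identity for $g_{t_n}$ gives
\begin{align*}
\|A+t_n e^{i\varphi}B\| &= \text{Re}\bigl(g_{t_n}(Ay_{t_n})\bigr)+t_n\,\text{Re}\bigl(e^{i\varphi}g_{t_n}(By_{t_n})\bigr)\\
&\leq \|A\|+t_n\,\text{Re}\bigl(e^{i\varphi}g_{t_n}(By_{t_n})\bigr),
\end{align*}
and dividing by $t_n$ and letting $n\to\infty$ yields $D_{\varphi,A}(B)\leq\text{Re}(e^{i\varphi}g_0(By_0))$; the $\varphi$-analogue of Proposition~\ref{p2} applied to $D_{\varphi,Ay_0}(By_0)$ then gives $\text{Re}(e^{i\varphi}g_0(By_0))\leq D_{\varphi,Ay_0}(By_0)$, finishing the proof of the envelope identity. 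The delicate point is that a direct triangle-inequality comparison of $\|Ay_{t_n}+t_n e^{i\varphi}By_{t_n}\|$ with $\|Ay_0+t_n e^{i\varphi}By_0\|$ would require the rate estimate $\|Ay_{t_n}-Ay_0\|=o(t_n)$, which need not hold; routing through the dual variable $g_{t_n}$ replaces $Ay_{t_n}$ by the scalar $\text{Re}(g_{t_n}(Ay_{t_n}))$, uniformly bounded above by $\|A\|$, and thereby sidesteps the rate issue.
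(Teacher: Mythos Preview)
The paper does not supply its own proof of this theorem; it merely cites W\'{o}jcik's result and indicates that his argument proceeds \emph{via} Theorem~\ref{x}. Your proposal follows precisely that route: you reduce orthogonality to $\inf_{\varphi}D_{\varphi,A}(B)\geq 0$ by Theorem~\ref{x} and then establish the envelope identity $D_{\varphi,A}(B)=\max\{D_{\varphi,Ay}(By):y\in M_A\}$ using reflexivity and compactness to extract norm-attaining limit data. The argument is correct, including the dual-variable trick that circumvents the missing rate estimate on $\|Ay_{t_n}-Ay_0\|$, and it matches the approach the paper attributes to W\'{o}jcik.
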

Let $\hc_1$ and $\hc_2$ be Hilbert spaces. In $K(\hc_1, \hc_2)$, the above theorem reduces to saying that for $A,B\in K(\hc_1,\hc_2)$, $A$ is orthogonal to $B$ if and only if  there is a unit vector $h\in \hc_1$ such that $\norm{Ah} = \norm{A}$ and $\bra Ah | Bh\ket = 0$. But this is not always the case with  reflexive Banach spaces. 

An alternate proof of Theorem \ref{2}  was given in \cite{2013} 
by first giving a characterization of $\|A\| \leq \|A+tB\|$ for all $t\in \bR$ using Corollary \ref{cor 2.9}, and then extend the result to complex scalars to obtain Theorem \ref{2}. In \cite{Wrong 1}, $\|A\| \leq \|A+tB\|$ for all $t\in \bR$ is termed as \emph{$A$ is $r$-orthogonal to $B$}, and the same characterization as in \cite{2013} is given for $r$-orthogonality using a different approach. Using the same idea as in \cite[Theorem 2.7]{2013}, a proof of Theorem \ref{2} was given in \cite[Corollary 2.2]{Wrong 1}.
 
 The technique of using the subdifferential set as done in \cite{2013} has advantages that it gives a way to generalize Theorem \ref{2} to characterize orthogonality to a subspace of $M_n(\bC)$. 

\begin{theorem}\label{t3.4}(\cite[Theorem 1]{2014}) Let $A\in M_n(\bC)$. Let $m(A)$ denotes the multiplicity of maximum singular value $\norm{A}$ of A. Let $\B$ be any (real or complex) subspace of $M_n(\bC)$. Then $A$ is orthogonal to $\B$ if and only if there exists a density matrix $P$ of complex rank atmost $m(A)$ such that $A^*AP = \norm{A}^2P$ and $tr(APB^*) = 0$ for all $B\in\B$.\end{theorem}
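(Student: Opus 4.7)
The plan is to re-express $\partial\|A\|$ via density matrices, translate orthogonality into a subdifferential condition, and then promote a pointwise-in-$B$ existence statement to a single density matrix witnessing orthogonality uniformly on $\B$ using Hahn--Banach separation.

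I would first parametrise $\partial\|A\|$. Let $M$ denote the eigenspace of $A^*A$ for the eigenvalue $\|A\|^2$, so $\dim_{\bC} M = m(A)$. By Corollary~\ref{cor 2.9}, $\partial\|A\|$ is the convex hull of the rank-one matrices $uv^*$ with $\|u\|=\|v\|=1$ and $Av=\|A\|u$; such $v$ must lie in $M$ and $u$ is determined as $Av/\|A\|$. Writing a convex combination $\sum_i\lambda_i(Av_i/\|A\|)v_i^*$ as $AP/\|A\|$ with $P=\sum_i\lambda_iv_iv_i^*$, and conversely spectrally decomposing any density matrix supported on $M$, identifies $\partial\|A\|$ with $\{AP/\|A\| : P\in K\}$, where $K$ is the compact convex set of density matrices satisfying $A^*AP=\|A\|^2P$. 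Every $P\in K$ has range in $M$, so $\rank P\le m(A)$ is automatic.

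Using $P=P^*$ and cyclicity of trace, $\mathrm{Re}\,\tr(G^*B)=\mathrm{Re}\,\tr(APB^*)/\|A\|$ whenever $G=AP/\|A\|$. Proposition~\ref{p2} then expresses the right-hand directional derivative of the norm at $A$, and, combined with Theorem~\ref{x}, shows that $A\perp\B$ is equivalent to the pointwise statement that for every $B\in\B$ there exists $P_B\in K$ with $\tr(AP_BB^*)=0$. To upgrade this to a single $P$ working on all of $\B$, consider the continuous affine map $\Phi\colon K\to\bC^{\dim\B}$ defined by $\Phi(P)=(\tr(APB_j^*))_j$ for a basis $\{B_j\}$ of $\B$. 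The image $\Phi(K)$ is compact and convex; if $0\notin\Phi(K)$, Hahn--Banach separation in the ambient real vector space produces a non-trivial real-linear functional $\Lambda$ and $\alpha>0$ with $\Lambda\circ\Phi\ge\alpha$ on $K$. The general form of such a $\Lambda$ allows us to present $\Lambda\circ\Phi(P)=\mathrm{Re}\,\tr(APC_0^*)$ for a distinguished element $C_0$ in (the complex span of) $\B$; the pointwise statement applied to $-C_0$ furnishes $P'\in K$ with $\mathrm{Re}\,\tr(AP'C_0^*)\le 0$, contradicting the strict lower bound. Hence $0\in\Phi(K)$, yielding the required uniform $P$.

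The reverse implication is a direct calculation: given such a $P$, for any scalar $k$ and $B\in\B$,
$$\|A+kB\|^2\ge\tr\bigl((A+kB)^*(A+kB)P\bigr)=\|A\|^2+|k|^2\tr(B^*BP)\ge\|A\|^2,$$
since $\tr(A^*AP)=\|A\|^2$ and the cross terms $k\,\tr(A^*BP)$ and $\bar k\,\tr(B^*AP)$ both vanish by cyclicity and $\tr(APB^*)=0$. The most delicate step is the Hahn--Banach upgrade, specifically verifying that the separator $C_0$ can be arranged to lie inside $\B$ itself, so that the pointwise hypothesis is applicable on $-C_0$. This is immediate when $\B$ is a complex subspace; the real-subspace case calls for an additional argument to bring the separating element, which a priori only lives in the complex span of $\B$, back into $\B$.
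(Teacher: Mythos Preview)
Your argument is essentially correct, including the parametrisation of $\partial\|A\|$ and the reverse implication. The paper itself does not give a proof here (the result is quoted from \cite{2014}), but the tools it highlights point to a shorter route than your pointwise-then-upgrade scheme. The sentence preceding Theorem~\ref{th2.1} already records Singer's characterisation: $A\perp\B$ if and only if there is a \emph{single} norm-one functional $f$ with $f(A)=\|A\|$ and $f|_{\B}=0$. Identifying such an $f$ with an element $G\in\partial\|A\|$ (via Proposition~\ref{p1}) and then with $AP/\|A\|$ for some $P\in K$ via your parametrisation finishes the forward implication in one stroke; the map $\Phi$ and the separation argument are not needed. Your Hahn--Banach step is, in effect, a reconstruction of Singer's theorem in this particular instance, so what your approach buys is self-containment at the cost of length.

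Your concern about real subspaces also dissolves once one notices that Birkhoff--James orthogonality in $M_n(\bC)$ is tested over complex scalars, so $A\perp\B$ is equivalent to $A\perp\bC\B$; one may therefore replace $\B$ by its complex span at the outset. In the direct approach, Theorem~\ref{th2.1} then applies to the complex subspace $\bC\B$; in your approach, the pointwise hypothesis is now available for every $B\in\bC\B$, so the separating element $C_0\in\bC\B$ can be fed back in without further work. Finally, the condition $\tr(APB^*)=0$ for all $B\in\B$ is equivalent to the same condition on $\bC\B$, since $B\mapsto\tr(APB^*)$ is conjugate-linear.
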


Theorem \ref{t3.4} can be expressed in terms of \emph{states} on $M_n(\bC)$. Let $\A$ be a unital $C^*$-algebra over $\bF(=\bR$ or $\bC)$ with the identity element  $1_{\A}$. For $\bF=\bC$, a state on $\A$ is a linear functional $\phi$ on $\A$ which takes $1_{\A}$ to $1$ and positive elements of $\A$ to non-negative real numbers. For $\bF=\bR$, an additional requirement for $\phi$ to be a state is that $\phi(a^*) = \phi(a)$ for all $a\in\A$. Let $S_{\A}$ denotes the set of states on $\A$.
Recently,  the authors noticed  in \cite{2019} that  if $P$ is a density matrix such that $\tr(A^*AP) = \norm{A}^2$, then $P$ is a matrix of complex rank atmost $m(A)$ such that $A^*AP = \norm{A}^2P$ (the proof of this fact is along the lines of proof of Theorem 1.1 in \cite{Bhatia}).
Due to this fact, the above theorem can be restated in terms of states on $M_n(\bC)$ as follows: $A$ is orthogonal to $\B$ if and only if there exists   $\phi\in S_{M_n(\bC)}$ such that $\phi(A^*A) = \norm{A}^2$ and $\phi(AB^*) =0$ for all $B\in \B$. In a general complex $C^*$-algebra $\A$, it was shown in \cite[Theorem 2.7]{2012} that an element $a\in \A$ is orthogonal to another element $b\in \A$ if and only if there exists $\phi\in S_{\A}$ such that $\phi(a^*a)=\|a\|^2$ and $\phi(a^*b)=0$. A different proof of this result was given in \cite[Proposition 4.1]{2013}. Theorem 6.1 in \cite{rieffel 1} shows that if $\B$ is a unital $C^*$-subalgebra of a complex $C^*$-algebra $\A$ and if a Hermitian element $a$ of $\A$  is orthogonal to $\B$, then there exists $\phi\in S_{\A}$ such that $\phi(a^2)=\|a\|^2$ and $ \phi(ab+b^*a)=0$  for all $b\in\B$. Recently, the authors have extended these results to any (real or complex) $C^*$-algebra $\A$ for any element $a\in \A$ and any subspace $\B$ of $\A$ (see \cite{2019}). These are given in the next theorem.

 If $\A$ is a complex (or real) unital $C^*$-algebra, then the triple  $(\hc, \pi, \xi)$ denotes a cyclic representation of $\A$, where $\hc$ is a complex (or real) Hilbert space and $\pi: \A\rightarrow \mathscr B(\hc)$ is a $^*$-algebra map  such that $\pi(1_\A) = 1_{\mathscr B(\hc)}$ and $\{\pi(a)\xi: a\in\A\}$ is dense in $\mathscr B(\hc)$.  For $\phi\in S_{\A}$, there exists a cyclic representation $(\hc, \pi, \xi)$ such that $\phi(a) = \bra\pi(a)\xi | \xi\ket$ for all $a\in\A$ (see \cite[p. 250]{conway}, \cite[Proposition 15.2]{goodearl}). 
\begin{theorem}\label{4}([\cite[Corollary 1.3]{2019}) Let $a\in\A$. Let $\B$ be a subspace of $\A$. Then the following are equivalent.
\begin{enumerate}\item $a$ is orthogonal to $\B$. \\
\item There exists $\phi\in S_{\A}$ such that $\phi(a^*a) = \norm{a}^2$ and $\phi(a^*b) =0$ for all $b\in\B$. \\
\item There exists a cyclic representation $(\hc, \pi, \xi)$ such that $\norm{\pi(a)\xi} = \norm{a}$ and $\bra\pi(a)\xi | \pi(b)\xi\ket = 0$ for all $b\in\B$.\end{enumerate}\end{theorem}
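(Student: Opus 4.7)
The plan is to establish (2) $\Leftrightarrow$ (3) via the GNS construction, (2) $\Rightarrow$ (1) by a direct positivity argument, and then the main direction (1) $\Rightarrow$ (2) via Hahn-Banach together with the polar decomposition of functionals on a $C^*$-algebra.

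For (2) $\Leftrightarrow$ (3), given a state $\phi$ satisfying (2), the associated GNS triple $(\hc_\phi,\pi_\phi,\xi_\phi)$ satisfies $\phi(x)=\bra\pi_\phi(x)\xi_\phi|\xi_\phi\ket$, so $\phi(a^*a)=\norm{\pi_\phi(a)\xi_\phi}^2$ and $\phi(a^*b)=\bra\pi_\phi(b)\xi_\phi|\pi_\phi(a)\xi_\phi\ket$, which is (3); conversely, any cyclic representation $(\hc,\pi,\xi)$ yields a state $\phi(x)=\bra\pi(x)\xi|\xi\ket$ satisfying (2). For (2) $\Rightarrow$ (1), using $\phi(b^*a)=\overline{\phi(a^*b)}=0$ (or $\phi(b^*a)=\phi(a^*b)=0$ in the real case), for every $b\in\B$ and scalar $k$,
\[\norm{a+kb}^2=\norm{(a+kb)^*(a+kb)}\geq\phi((a+kb)^*(a+kb))=\norm{a}^2+|k|^2\phi(b^*b)\geq\norm{a}^2.\]

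The main content is (1) $\Rightarrow$ (2). First I would apply a Hahn-Banach argument. Assuming $a\neq 0$ (the case $a=0$ being trivial), the hypothesis $a\perp\B$ forces $a\notin\B$ (otherwise $a\perp a$ gives $a=0$), so $f_0(\lambda a+b):=\lambda\norm{a}$ is well-defined on the subspace $\bF a+\B$, and $|f_0(\lambda a+b)|=|\lambda|\norm{a}\leq\norm{\lambda a+b}$ by orthogonality. Extend by Hahn-Banach to $f\in\A^*$ with $\norm{f}=1$, $f(a)=\norm{a}$, and $f|_\B=0$. Next, invoke the polar decomposition of bounded functionals on a $C^*$-algebra carried out in its enveloping von Neumann algebra $\A^{**}$ (due to Sakai): there exist a state $\phi$ on $\A$ and a partial isometry $u\in\A^{**}$ with $f(x)=\phi(ux)$ for all $x\in\A$; in the real case one works with the complexification $\A\otimes_{\bR}\bC$ and verifies that the data descend to $\A$.

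To finish, extend $\pi_\phi$ normally to $\A^{**}$. The identity $\norm{a}=\phi(ua)=\bra\pi_\phi(a)\xi_\phi|\pi_\phi(u^*)\xi_\phi\ket$, together with $\norm{\pi_\phi(a)\xi_\phi}\leq\norm{a}$ and $\norm{\pi_\phi(u^*)\xi_\phi}\leq 1$, forces equality in Cauchy-Schwarz and hence $\pi_\phi(a)\xi_\phi=\norm{a}\pi_\phi(u^*)\xi_\phi$, which in particular gives $\phi(a^*a)=\norm{a}^2$. Then for every $b\in\B$,
\[\phi(a^*b)=\bra\pi_\phi(b)\xi_\phi|\pi_\phi(a)\xi_\phi\ket=\norm{a}\,\bra\pi_\phi(b)\xi_\phi|\pi_\phi(u^*)\xi_\phi\ket=\norm{a}\,\phi(ub)=\norm{a}\,f(b)=0,\]
completing the proof. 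The chief obstacle is the polar decomposition step: it is a substantial piece of $C^*$-algebra machinery, and making the argument work cleanly for real $C^*$-algebras requires the complexification detour indicated above; the rest of the argument is then a direct consequence of equality in the Cauchy-Schwarz inequality.
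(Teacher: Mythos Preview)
The paper does not actually prove Theorem~\ref{4}; it states the result and defers the proof to the companion paper \cite{2019}, so there is no in-text argument to compare your proposal against directly.

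That said, your argument is correct. The equivalences (2)~$\Leftrightarrow$~(3) and (2)~$\Rightarrow$~(1) are routine, and your route for (1)~$\Rightarrow$~(2) via Hahn--Banach followed by the Sakai polar decomposition $f=\phi(u\,\cdot\,)$ in $\A^{**}$ and the equality case of Cauchy--Schwarz is a clean, standard line. One detail worth making explicit: from $\norm{a}=\bra\pi_\phi(a)\xi_\phi\,|\,\pi_\phi(u^*)\xi_\phi\ket\le\norm{\pi_\phi(a)\xi_\phi}\,\norm{\pi_\phi(u^*)\xi_\phi}\le\norm{a}$, \emph{both} bounds $\norm{\pi_\phi(a)\xi_\phi}\le\norm{a}$ and $\norm{\pi_\phi(u^*)\xi_\phi}\le1$ must be saturated, which is what yields $\phi(a^*a)=\norm{a}^2$; you assert this, but it is the step a reader will want unpacked.

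For context, the surrounding discussion in the paper points to other routes to results of this type: the distance-formula approach of \cite[Theorem~2.7]{2012}, the subdifferential-calculus approach of \cite[Proposition~4.1]{2013}, and Rieffel's argument in \cite{rieffel 1}. Your approach trades the convex-analysis machinery (Propositions~\ref{p1} and~\ref{p2}) for the von~Neumann-algebra machinery of $\A^{**}$; both reach the same place, and yours has the advantage of handling an arbitrary subspace $\B$ in a single stroke without first treating the one-dimensional case. The only genuinely sketchy point in your outline is the real case: the polar decomposition is a complex-$C^*$-algebra fact, and ``verifying that the data descend to $\A$'' after complexifying requires showing the resulting state on $\A_{\bC}$ can be chosen conjugation-invariant so that its restriction is a state on $\A$ in the sense defined here (i.e.\ satisfying $\phi(a^*)=\phi(a)$). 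This is true but deserves a line of justification rather than a parenthetical.
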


When $\A=C(X)$,  Theorem \ref{4} and  Riesz Representation Theorem yield the following theorem by  Singer \cite[Theorem 1.3, Ch. I]{Singer}.
\begin{cor}(\cite[Theorem 1.3, Ch. I]{Singer}) Let $f\in C(X)$. Let $\B$ be a subspace of $C(X)$. Then $f$ is  orthogonal to $\B$ if and only if there exists a probability measure $\mu$ on $X$ such that $$\dist(a, \B)^2 = \int\limits_X |f|^2 d\mu \text{ and }\int\limits_X \overline{f}h d\mu = 0$$ for all $h\in\B$.\end{cor}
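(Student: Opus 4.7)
The plan is to derive this corollary directly from Theorem \ref{4} applied to the commutative unital $C^*$-algebra $\A = C(X)$, using the Riesz Representation Theorem to translate the state-theoretic conditions into measure-theoretic ones. Since the involution on $C(X)$ is pointwise complex conjugation, for $f,h \in C(X)$ we have $f^*f = |f|^2$ and $f^*h = \overline{f}h$, so the quantities appearing in Theorem \ref{4} are precisely those appearing in the corollary.

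First I would invoke the equivalence $(1)\Leftrightarrow(2)$ of Theorem \ref{4}: $f$ is Birkhoff--James orthogonal to $\B$ if and only if there exists $\phi \in S_{C(X)}$ such that $\phi(|f|^2) = \|f\|^2$ and $\phi(\overline{f}h) = 0$ for every $h \in \B$. Next, by the Riesz Representation Theorem, states on $C(X)$ are in bijective correspondence with regular Borel probability measures $\mu$ on the compact Hausdorff space $X$, via $\phi(g) = \int_X g\,d\mu$ for $g \in C(X)$; positivity of $\phi$ corresponds to $\mu \geq 0$, and the normalization $\phi(1_{C(X)}) = 1$ corresponds to $\mu(X) = 1$. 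Substituting, the two state conditions become $\int_X |f|^2\,d\mu = \|f\|^2$ and $\int_X \overline{f}h\,d\mu = 0$ for every $h \in \B$.

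Finally, since $f$ orthogonal to $\B$ is equivalent to $0$ being a best approximation to $f$ in $\B$, that is, $\dist(f,\B) = \|f\|$, one may replace $\|f\|^2$ by $\dist(f,\B)^2$ to obtain the stated identity (reading the ``$a$'' in $\dist(a,\B)^2$ as a typographical slip for $f$). There is no serious obstacle here: the corollary is a dictionary translation between states on $C(X)$ and probability measures on $X$. The only item warranting care is confirming that the Riesz correspondence preserves both positivity and the unit mass normalization, which is a standard consequence of the theorem.
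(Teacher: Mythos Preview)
Your proof is correct and follows exactly the approach the paper indicates: apply Theorem \ref{4} with $\A=C(X)$ and invoke the Riesz Representation Theorem to identify states with probability measures, so that $\phi(f^*f)=\|f\|^2$ and $\phi(f^*h)=0$ translate into the stated integral conditions. Your observation that $\dist(a,\B)$ should read $\dist(f,\B)$ is also right; the paper carries the same slip in the sentence following the corollary.
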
 The condition $$ \norm{f}_{\infty}^2 = \dist(a, \B)^2 = \int\limits_X |f|^2 d\mu$$ is equivalent to saying that the support of $\mu$ is contained in the set $\{x\in X: | f(x)| = \norm{f}_\infty\}$. When $\B$ is one dimensional, this was proved in  \cite[Corollary 2.1]{Keckic 1} using Theorem \ref{x}.

Characterizations of orthogonality have been studied in several normed spaces.
Using Theorem \ref{x},  a characterization of orthogonality in $C_b(\Omega)$ was obtained in \cite[Corollary 3.1]{Keckic 1}.  In the Banach spaces $L^1(X, \nu)$ and $c_0$, Theorem \ref{x} was used to obtain such characterizations in \cite[Example 1.6, Example 1.7]{Keckic 2}.  For a separable Hilbert space $\hc$, expressions for $\varphi$-Gateaux derivative of the norms  on $\mathscr B_1(\hc)$ and $K(\hc)$ were given in \cite[Theorem 2.1, Theorem 2.6]{Keckic 2}  and were used to give characterizations of orthogonality in these spaces in \cite[Corollary 2.5, Corollary 2.8]{Keckic 2}.  Using tools of subdifferential calculus, characterizations of orthogonality in $(M_n(\bC),\|\cdot\|_{(k)})$ are given in  \cite[Theorem 1.1, Theorem 1.2]{2017}. A  necessary condition for orthogonality of a matrix $A$ to a subspace in $(M_n(\bC),\|\cdot\|_{(k)})$ is given in \cite[Theorem 1.3]{2017}. Under the condition that $s_k(A)>0$, the same  condition is shown to be sufficient also. Using \cite[ Theorem 1.1, Ch. II]{Singer}, a characterization of orthogonality in $M_n(\bC)$, with any norm, is given in \cite[Proposition 2.1]{Li Schneider} in terms of the dual norm. Using this,  orthogonality  in $M_n(\bC)$, with induced norms, is obtained in \cite[Proposition 4.2]{Li Schneider}. In $M_n(\bC)$, with \emph{Schatten $p$-norms} ($1\leq p\leq \infty$),  characterizations of orthogonality are given in \cite[Theorem 3.2, Theorem 3.3]{Li Schneider}. For $1<p<\infty$,  this was also given  in \cite[Theorem 2.1]{Bhatia}.

Orthogonality has been characterized in more general normed spaces, namely, \emph{Hilbert $C^*$-modules}. It was shown in \cite[Theorem 2.7]{2012}  that in a Hilbert $C^*$-module $\mathscr{E}$  over a  complex unital $C^*$-algebra $\A$, an element $e_1\in \mathscr E$ is orthogonal to another element $e_2\in \mathscr E$  if and only if there exists $\phi\in S_{\A}$ such that $ \phi(\left<e_1|e_1\right>)= \|e_1\|^2 \mbox{ and } \phi(\left<e_1|e_2\right>) = 0 $. Another proof of this result was given in \cite[Theorem 4.4]{2013}.
This can  be generalized to obtain a characterization of orthogonality to subspaces of Hilbert $C^*$-modules as follows. 
\begin{theorem}\label{6}(\cite[Theorem 3.5]{2019}) Let $\mathscr{E}$ be a Hilbert $C^*$-module over a unital complex $C^*$-algebra $\A$. Let $e \in \mathscr{E}$.  Let $\mathscr{F}$ be a subspace of $\mathscr{E}$. Then $e$ is orthogonal to $\mathscr{F}$  if and only if there exists  $\phi\in S_{\A}$ such that $ \phi(\left<e|e\right>)= \|e\|^2 \mbox{ and } \phi(\left<e|f\right>) = 0$ for all $f\in \mathscr{F}$.\end{theorem}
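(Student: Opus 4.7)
The plan for sufficiency is to apply the Cauchy--Schwarz inequality in the positive semidefinite sesquilinear form induced by $\phi$. Given such a state $\phi$, set $[x,y]_\phi := \phi(\langle y|x\rangle)$ on $\mathscr{E}$. For $f\in\mathscr{F}$ and $\lambda\in\bC$, using $\phi(\langle f|e\rangle) = \overline{\phi(\langle e|f\rangle)} = 0$, one finds $[e,e+\lambda f]_\phi = \phi(\langle e|e\rangle) = \|e\|^2$. Since $\phi$ is a state, $[v,v]_\phi \leq \|\langle v|v\rangle\| = \|v\|^2$ for every $v\in\mathscr{E}$. Combining this with Cauchy--Schwarz yields $\|e\|^4 \leq \|e\|^2\,\|e+\lambda f\|^2$, which is exactly Birkhoff--James orthogonality of $e$ to $\mathscr{F}$.

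For necessity, the plan is to reduce to the $C^*$-algebra case (Theorem \ref{4}) via the linking algebra. Let $\mathcal{L} := \mathcal{L}(\mathscr{E}\oplus\A)$ be the unital $C^*$-algebra of adjointable operators on the Hilbert $\A$-module $\mathscr{E}\oplus\A$. Each $x\in\mathscr{E}$ corresponds to the off-diagonal adjointable operator $\tilde x\in\mathcal{L}$ given by $\tilde x(y,a):=(xa,0)$. A routine computation gives $\|\tilde x\|_{\mathcal L}=\|x\|$, and $\tilde x^*\tilde y$ lies in the lower-right $\A$-corner of $\mathcal{L}$, where it is identified with left multiplication by $\langle x|y\rangle\in\A$. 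Consequently $\|\tilde e+\lambda\tilde f\|_{\mathcal L}=\|e+\lambda f\|$ for all $\lambda\in\bC$ and $f\in\mathscr{F}$, so the hypothesis $e\perp\mathscr{F}$ transports to $\tilde e\perp\{\tilde f:f\in\mathscr{F}\}$ in $\mathcal{L}$. Applying Theorem \ref{4} to $\mathcal{L}$ produces a state $\psi\in S_{\mathcal L}$ with $\psi(\tilde e^*\tilde e)=\|e\|^2$ and $\psi(\tilde e^*\tilde f)=0$ for every $f\in\mathscr{F}$.

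To push this state back to $\A$, let $p\in\mathcal{L}$ be the projection onto the $\A$-summand of $\mathscr{E}\oplus\A$. Since $\tilde e^*\tilde e$ is supported in $p\mathcal{L}p$ with $\tilde e^*\tilde e\leq\|e\|^2 p$, and $\psi$ already attains $\|e\|^2$ on $\tilde e^*\tilde e$, one forces $\psi(p)=1$. Then $\phi(a):=\psi(\iota(a))$, where $\iota:\A\hookrightarrow\mathcal{L}$ is the $*$-embedding $a\mapsto((y,b)\mapsto(0,ab))$, defines a state on $\A$ with $\phi(\langle e|e\rangle)=\|e\|^2$ and $\phi(\langle e|f\rangle)=0$ for all $f\in\mathscr{F}$. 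The main technical hurdle in this plan is the sign- and convention-dependent bookkeeping in the linking-algebra identification (verifying $\tilde x^*\tilde y\leftrightarrow\langle x|y\rangle$ and that $\iota$ preserves positivity and units after restriction through $\psi(p)=1$); once this is in place, the proof is a direct transfer from Theorem \ref{4}. An alternative, more self-contained strategy would be a Hahn--Banach extension inside $\A$: define a real-linear functional on the real-linear subspace generated by $\langle e|e\rangle$ together with $\{\langle e|f\rangle+\langle f|e\rangle,\ i(\langle f|e\rangle-\langle e|f\rangle):f\in\mathscr{F}\}$, sending $\langle e|e\rangle$ to $\|e\|^2$ and all remaining generators to zero, and use the orthogonality condition to obtain the required norm bound before extending to a state. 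The chief difficulty there is the quadratic dependence of $\langle e+\lambda f|e+\lambda f\rangle$ on $\lambda$, which makes the boundedness verification more delicate than in the $C^*$-algebra case.
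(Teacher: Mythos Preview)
Your argument is correct. The sufficiency via the positive semidefinite form $\phi(\langle\cdot|\cdot\rangle)$ and Cauchy--Schwarz is standard (and in fact the direct expansion $\phi(\langle e+\lambda f|e+\lambda f\rangle)=\|e\|^2+|\lambda|^2\phi(\langle f|f\rangle)\geq\|e\|^2$ already suffices). For the necessity, the linking-algebra reduction to Theorem~\ref{4} is sound: the map $x\mapsto\tilde x$ is a linear isometry, $\tilde x^*\tilde y=\iota(\langle x|y\rangle)$, and the inequality $\iota(\langle e|e\rangle)\leq\|e\|^2 p$ forces $\psi(p)=1$, so $\phi=\psi\circ\iota$ is a genuine state on $\A$ with the required properties. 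The only caveat is the trivial case $e=0$, which should be handled separately.

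The paper does not actually prove Theorem~\ref{6} here; it defers to \cite{2019} and sketches an alternative: first generalize the distance formula of \cite[Proposition~2.3]{2012} to subspaces (this is Theorem~\ref{7} below), and then argue as in \cite[Theorem~2.4]{2012} to pass from $\A$ to $\mathscr E$. Your linking-algebra route is a third approach, essentially the subspace version of the argument in \cite[Theorem~4.4]{2013} that the paper cites for the one-dimensional case. The trade-off is this: your method is short and reduces the module statement directly to the $C^*$-algebra statement (Theorem~\ref{4}) without any intermediate distance formula, at the cost of introducing the auxiliary algebra $\mathcal L(\mathscr E\oplus\A)$ and the bookkeeping you flag. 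The paper's suggested route stays closer to the module structure and yields the distance formulas of Theorems~\ref{7} and~\ref{7777} as by-products, but requires those formulas to be established first. Your alternative Hahn--Banach plan sketched at the end would indeed face the quadratic obstruction you mention and is less clean than either of the two approaches above.
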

A proof of Theorem \ref{6} can be found in \cite{2019}.
Alternatively, this can also be proved along the same lines of the proof of \cite[Theorem 2.4]{2012} by  finding a generalization of the distance formula \cite[Proposition 2.3]{2012} to a subspace. This extension of the distance formula is mentioned in the next section.

We end this section with various directions of research happening around the concept of orthogonality, where it comes into play naturally. In Hilbert $C^*$-modules, the role of scalars is played by the elements of the underlying $C^*$-algebra. Using this fact, a strong version of orthogonality was introduced in Hilbert $C^*$-modules in \cite{SBJo}. For a left Hilbert $C^*$-module, an element $e_1\in \mathscr E$ is said to be \emph{strong orthogonal} to another element $e_2\in \mathscr E$ if $\norm{e_1}\leq \norm{e_1+ae_2}$ for all $a\in\A$. Clearly, if $e_1$ is strong orthogonal to $e_2$, then we have $e_1$ is orthogonal to $e_2$. However, strong orthogonality is weaker than inner product orthogonality in $\mathscr{E}$ (see \cite[Example 2.4]{SBJo}). Necessary and sufficient conditions are studied in \cite[Theorem 3.1]{2012} and \cite[Theorem 3.5, Corollary 4.9]{SBJo 1}, when any two of these three orthogonalities coincide in a full Hilbert $C^*$-module. In \cite[Theorem 2.6]{Sym}, it was shown that in a full Hilbert $C^*$-module, strong orthogonality is symmetric if and only if Birkhoff-James  orthogonality is symmetric if and only if  strong orthogonality coincides with inner product orthogonality. Theorem 2.5 of \cite{SBJo} gives characterization of strong orthogonality in terms of Birkhoff-James orthogonality.
 
Characterizations of orthogonality are also useful in finding conditions for equality in triangle inequality in a normed space:
\begin{prop}\label{p3.8}(\cite[Proposition 4.1]{2012})
Let $V$ be a normed space. Let $ u,v\in V$. Then the following are equivalent.
\begin{enumerate}
\item $\|u+v\|=\|u\|+\|v\|$.
\item $v$ is orthogonal to $\|u\|v-\|v\|u$.
\item $u$ is orthogonal to $\|u\|v-\|v\|u$.
\end{enumerate}
\end{prop}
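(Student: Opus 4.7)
The plan is to reduce everything to the functional characterization of Birkhoff-James orthogonality stated just before Theorem \ref{th2.1} (from Singer): an element $v$ is orthogonal to a subspace $W$ if and only if there exists $f\in V^*$ with $\|f\|=1$, $f(v)=\|v\|$, and $f\equiv 0$ on $W$. The trivial cases $u=0$ or $v=0$ make all three statements true, so I would dispense with them first and assume both $u,v$ are nonzero.

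For the implication $(1)\Rightarrow(2)$ and $(1)\Rightarrow(3)$, I would invoke Hahn-Banach to pick $f\in V^*$ with $\|f\|=1$ and $f(u+v)=\|u+v\|$. Using the hypothesis $(1)$, one has $\mathrm{Re}\,f(u)+\mathrm{Re}\,f(v)=\|u\|+\|v\|$, while $|f(u)|\leq\|u\|$ and $|f(v)|\leq\|v\|$ force equalities $f(u)=\|u\|$ and $f(v)=\|v\|$ (the imaginary parts must vanish because otherwise the real parts would be strictly smaller than the moduli). Then
\begin{equation*}
f\bigl(\|u\|v-\|v\|u\bigr)=\|u\|f(v)-\|v\|f(u)=\|u\|\|v\|-\|v\|\|u\|=0,
\end{equation*}
so the same $f$ simultaneously witnesses that $v$ is orthogonal to $\|u\|v-\|v\|u$ (via $f(v)=\|v\|$) and that $u$ is orthogonal to $\|u\|v-\|v\|u$ (via $f(u)=\|u\|$), giving both $(2)$ and $(3)$ at once.

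For $(2)\Rightarrow(1)$, by the functional characterization applied to the one-dimensional subspace spanned by $w:=\|u\|v-\|v\|u$, there exists $f\in V^*$ of norm one with $f(v)=\|v\|$ and $f(w)=0$. The second equation reads $\|u\|f(v)=\|v\|f(u)$, and since $\|v\|\neq 0$ this gives $f(u)=\|u\|$. Hence $f(u+v)=\|u\|+\|v\|$, and combined with $\|f\|=1$ we get $\|u+v\|\geq\|u\|+\|v\|$; the reverse inequality is the triangle inequality, so equality holds. The proof of $(3)\Rightarrow(1)$ is symmetric: from $f(u)=\|u\|$ and $f(w)=0$, deduce $f(v)=\|v\|$ using $\|u\|\neq 0$, and conclude as before.

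There is no serious obstacle here; the only subtlety is ensuring, in the complex case, that the extremal functional actually satisfies $f(u)=\|u\|$ rather than merely $\mathrm{Re}\,f(u)=\|u\|$, which follows immediately from $|f(u)|\leq\|u\|$. The single functional $f$ produced in $(1)\Rightarrow(2),(3)$ does double duty and makes the equivalence of $(2)$ and $(3)$ transparent.
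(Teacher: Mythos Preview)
Your proof is correct. The paper itself does not supply a proof of this proposition; it is quoted as \cite[Proposition 4.1]{2012} and stated without argument, so there is no paper-proof to compare against. Your approach via the Hahn--Banach/Singer functional characterization is the standard one and is exactly in the spirit of the tools the survey assembles (Theorem~\ref{th2.1} and the remark preceding it), so it fits naturally.
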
 
This can be extended to the case of arbitrarily finite families of vectors (see \cite[Remark 4.2]{2012}). As mentioned in \cite[Remark 4.2]{2012}, a characterization of triangle equality in a pre-Hilbert $C^*$-module  given in \cite[Theorem 2.1]{trieq} can be proved using Theorem \ref{6} and  Proposition \ref{p3.8}. 
For the study of  various other equivalent conditions for equality in triangle inequality or Pythagoras equality, the interested reader is referred to \cite{trieq, 2012, trieq 2}. 

In a normed linear space $V$, an element $u$ is said to be \emph{norm-parallel} to another element $v$  (denoted by $u\parallel v$)  if $\norm{u+\lambda v} = \norm{u} + \norm{v}$ for some $\lambda\in\bF$ with $|\lambda| = 1$ \cite{Seddik}. In the case of
inner product spaces, the norm-parallel relation is exactly the usual vectorial parallel relation, that is, $u\parallel v$ if and only if $u$ and $v$ are linearly dependent.  Seddik \cite{Seddik} introduced this concept while studying elementary operators on a standard operator algebra. Interested readers for the work on elementary operators and orthogonality are referred to \cite{Anderson, Du Hong, Duggal 1,Seddik,Seddik 1,williams}, and also to \cite[Theorem 4.7]{2012} and \cite[Section 3]{Keckic 2}. 

As a direct consequence of Proposition \ref{p3.8}, norm-parallelism can be characterized using the concept of orthogonality (this characterization is also given in \cite[Theorem 2.4]{Parallel 1}). So the results on orthogonality can be used to  find results on norm-parallelism, for example, see \cite[Proposition 2.19]{Parallel 1} for $M_n(\bC)$ equipped with the Schatten $p$-norms  and \cite[Remark 2]{2017} for $(M_n(\bC),  \|\cdot\|_{(k)})$. The characterizations of norm-parallelism in $C(X)$ are given in \cite{Parallel 2}, and in  $\mathscr B_1(\hc)$ and $K(\hc)$ are given in \cite{Parallel 3}. Other  results in $\mathscr B(V_1,V_2)$ (with restrictions on $V_1$ and $V_2$, and the operators) are given in \cite{Parallel 4, Parallel 5}. Some of these results can be obtained by using \cite[Theorem 2.4]{Parallel 1} and the corresponding results on orthogonality. 
Some variants of the definition of norm-parallelism have been introduced in \cite{MalSainPaul, Parallel 3, Parallel 6}. Concepts of approximate Birkhoff-James orthogonality and $\eps$-Birkhoff orthogonality have been studied in \cite{Approx 1000, Approx 3, Approx 1, Approx 2, Approx 6,  Approx 4, Approx 5}. The idea to define these concepts of approximate Birkhoff-James orthogonality and $\eps$-Birkhoff orthogonality in a normed space is to generalize the concept of approximate orthogonality in inner product spaces, which is defined as  $v\bot^{\eps} u \iff \big|\langle v|u \rangle\big|\leq\eps\norm{v}\norm{u}$. The latter has been studied  in \cite{Approx 100} and  \cite[Section 5.2]{last}.}

\section{Distance formulas and conditional expectations}\label{Sec4}

An important connection of orthogonality with  distance formulas was noted in \eqref{95} and \eqref{96}.
From \eqref{95}, we also get that for any $A\in M_n(\bC)$, \begin{equation*}\label{99}\dist(A, \bC 1_{M_n(\bC)}) = \max\left\{\big|\bra Ax | y\ket\big| : \norm{x} = \norm{y} = 1 \text{ and } y\bot x\right\}. \end{equation*}  Using this, one obtains \begin{align}\dist(A, \bC 1_{ M_n(\bC)}) &= 2\max\{\norm{U'AU'^*-UAU^*} :  U, U' \text{ unitary}\} \nonumber\\
&= 2 \max\{\norm{AU-UA} : U \text{ unitary}\}\nonumber\\
&= 2 \max\{\norm{AT-TA} : T\in M_n(\bC), \norm{T} =1\}. \label{i}
\end{align} This was proved in \cite[Theorem 1.2]{Bhatia} and the discussion after that.  The operator $\delta_A(T) = AT-TA$ on $M_n(\bC)$ is called an \emph{inner derivation}. So this gives that  $\|\delta_A\|=2\ \dist(A, \bC 1_{M_n(\bC)})$. This was also extended to the infinite dimensional case in \cite[Remark 3.2]{Bhatia}. These results were first proved by Stampfli \cite{derivation} using a completely different approach. Since all the derivations on $\mathscr B(\hc)$ are inner derivations (see \cite[Theorem 9]{Kaplansky}), the norm of any derivation on $\mathscr B(\hc)$ is $2\ \dist(A, \bC 1_{\mathscr B(\hc)})$, for some $A\in \mathscr B(\hc)$. For $A,B\in \mathscr B(\hc)$,  let $\delta_{A, B}(T) = AT - TB$ for all $T\in\mathscr B(\hc)$.
 In \cite[Theorem 8]{derivation}, an expression for the norm of the elementary operator $\delta_{A,B}$ is given. 
In \cite[Theorem 5]{derivation}, a distance formula was obtained in an irreducible unital $C^*$-algebra as follows.

\begin{theorem}(\cite[Theorem 5]{derivation})\label{5} Let $\hc$ be a complex Hilbert space. Let $\B$ be an irreducible unital $C^*$-subalgebra of $\mathscr B(\hc)$. Let $A\in\B$. Then \begin{equation*}\label{98}2\ \dist(A, \bC 1_{\mathscr B(\hc)}) = \sup\{\norm{AT - TA} : T \in\B \text{ and } \norm{T} =1\} = \norm{\left.\delta_{A}\right|_{\B}}.\end{equation*}\end{theorem}
By the Russo-Dye theorem \cite[II.3.2.15]{Blackadar},  under the assumptions of the above theorem, we obtain\begin{equation}\label{97}2\ \dist(A, \bC 1_{\mathscr B(\hc)}) = \sup\{\norm{AU - UA} : U \in\B \text{ and } U \text{ is unitary}\}.\end{equation}

Expressions for the norm of a derivation on von Neumann algebras can be found in \cite{Gajendragadkar}. The most important fact used here is that all the derivations on von Neumann algebras are inner. This was a conjecture by  Kadison for a long time and was proved in \cite{Sakai}. More on derivations on a $C^*$-algebra can be found in \cite{Johnson, Kadison, Miles, Zsido}. A lot of work has been done to answer the question when the range of a derivation is orthogonal to its kernel. It was proved in \cite[Theorem 1.7]{Anderson} that if $N$ is a normal operator in $\mathscr B(\hc)$, then the kernel of $\delta_N$ is orthogonal to the range of $\delta_N$. In \cite[Theorem 1]{ Kittaneh 2}, it was shown that the Hilbert-Schmidt operators in the kernel of $\delta_N$ are orthogonal to the Hilbert-Schmidt operators in the range of $\delta_N$, in the usual Hilbert space sense. In \cite[Theorem 3.2(a)]{Maher},  the Schatten $p$-class operators in the kernel of $\delta_N$ were shown to be orthogonal to the  Schatten $p$-class operators in the range of $\delta_N$, in the Schatten $p$-norm.  A similar result for the orthogonality in unitarily invariant norms defined on the norm ideals of $K(\hc)$ is given in \cite[Theorem 1]{Kittaneh 3}. For related study on derivations, elementary operators and orthogonality in these normed spaces, see \cite{Duggal, Keckic 3, Kittaneh 1,  Mazouz, Mecheri 4, Mecheri 1, Mecheri 2, Mecheri 3, Eo, A10, A11}.

Similar to \eqref{97}, an expression for the distance of an element of a general $C^*$-algebra from a $C^*$-subalgebra can be obtained from the below theorem of Rieffel \cite{rieffel 1}.
\begin{theorem}\label{xxx}(\cite[Theorem 3.2]{rieffel 1}) Let $\A$  be a $C^*$-algebra. Let $\B$ be a $C^*$-subalgebra of $\A$ which contains a bounded approximate identity for $\A$. Let $a\in\A$.  Then there exists a cyclic representation $ (\hc, \pi, \xi)$ of $\A$ and a Hermitian as well as a unitary operator $U$ on $\hc$ such that $\pi(b)U = U\pi(b)$ for all $b\in\B$ and $\dist(a, \B) = \frac{1}{2} \norm{\pi(a)U - U\pi(a)}$. \end{theorem}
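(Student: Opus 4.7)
The plan is to produce $(\hc,\pi,\xi)$ and $U$ by first extracting a state on $\A$ that realizes $\dist(a,\B)$, then using GNS and the $^*$-closedness of $\B$ to build $U$ as the symmetry $2P-I$ for a suitable projection $P$.

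Let $d=\dist(a,\B)$. Assume first that a best approximation $b_0\in\B$ to $a$ exists, so $a_0:=a-b_0$ is Birkhoff-James orthogonal to $\B$ with $\|a_0\|=d$. Theorem \ref{4} then yields a state $\phi\in S_\A$ with $\phi(a_0^*a_0)=d^2$ and $\phi(a_0^*b)=0$ for all $b\in\B$; the GNS construction provides a cyclic representation $(\hc,\pi,\xi)$ of $\A$ with $\phi(x)=\bra\pi(x)\xi|\xi\ket$, $\|\xi\|=1$, $\|\pi(a_0)\xi\|=d$, and $\pi(a_0)\xi\perp\pi(b)\xi$ for every $b\in\B$.

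Set $\hk:=\overline{\pi(\B)\xi}$. Since $\B$ is $^*$-closed, both $\hk$ and $\hk^\perp$ are invariant under $\pi(\B)$, so the orthogonal projection $P$ onto $\hk$ commutes with $\pi(\B)$; consequently $U:=2P-I$ is a Hermitian unitary commuting with $\pi(\B)$, and $\tfrac{1}{2}\left(\pi(a)U-U\pi(a)\right)=[\pi(a),P]$. For the upper bound, $[\pi(a),P]=[\pi(a-b),P]$ for every $b\in\B$ (as $\pi(b)$ commutes with $P$), which forces $\|[\pi(a),P]\|\le 2\|a-b\|$ and hence $\|[\pi(a),P]\|\le 2d$. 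For the lower bound, the key observation is $\xi\in\hk$: using the bounded approximate identity $(e_\alpha)\subset\B$ for $\A$, one has $\phi(e_\alpha)\to\|\phi\|=1$ and, by Cauchy-Schwarz for states combined with $\phi(e_\alpha^2)\le 1$, also $\phi(e_\alpha^2)\to 1$, giving $\|\pi(e_\alpha)\xi-\xi\|^2=\phi(e_\alpha^2)-2\text{Re}\,\phi(e_\alpha)+1\to 0$. With $\xi\in\hk$, the decomposition $\pi(a)\xi=\pi(a_0)\xi+\pi(b_0)\xi$ together with $\pi(a_0)\xi\in\hk^\perp$ and $\pi(b_0)\xi\in\hk$ yields $[\pi(a),P]\xi=(I-P)\pi(a)\xi=\pi(a_0)\xi$, of norm $d$; hence $\|[\pi(a),P]\|\ge d$.

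The principal obstacle is the potential non-existence of a best approximation $b_0\in\B$ for a general $C^*$-subalgebra. I would remove this assumption by selecting $b_n\in\B$ with $\|a-b_n\|\to d$, applying Theorem \ref{4} in an approximate form to each $a-b_n$ to obtain states $\phi_n\in S_\A$ satisfying $\phi_n((a-b_n)^*(a-b_n))\to d^2$ and $\phi_n((a-b_n)^*b)\to 0$ for every $b\in\B$, and then extracting a weak-$*$ cluster point $\phi$ in the weak-$*$ compact state space of (a unitization of) $\A$. The limit $\phi$ should satisfy $\phi(a^*a)=d^2$ and $\phi(a^*b)=0$ for all $b\in\B$, after which the GNS step and the argument above apply verbatim.
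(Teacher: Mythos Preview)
The paper does not give its own proof of this theorem: it is quoted verbatim from \cite[Theorem 3.2]{rieffel 1} and then used. So there is nothing in the paper to compare your argument against; what follows is an assessment of your proposal on its own merits.

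In the best-approximation case your construction is the right one, but the upper bound as written does not close the argument. You show $\|[\pi(a),P]\|\le 2\|a-b\|$, hence $\|[\pi(a),P]\|\le 2d$, and combine this with the lower bound $\|[\pi(a),P]\|\ge d$; that only gives $d\le \tfrac{1}{2}\|\pi(a)U-U\pi(a)\|\le 2d$. The missing observation is that for any projection $P$ and any operator $T$, the commutator $[T,P]$ is block off-diagonal with respect to $P\oplus(I-P)$, so $\|[T,P]\|=\max\{\|PT(I-P)\|,\|(I-P)TP\|\}\le\|T\|$. Applying this with $T=\pi(a-b)$ gives $\|[\pi(a),P]\|\le\|a-b\|$ for every $b\in\B$, hence $\le d$, and your lower bound then yields equality. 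With this correction the best-approximation case is fine.

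The removal of the best-approximation hypothesis, however, has a genuine gap. Theorem \ref{4} is an exact characterization of Birkhoff--James orthogonality; there is no ``approximate form'' available to apply to $a-b_n$ when $b_n$ is not a best approximation, and you have not supplied one. More seriously, the target conditions you write for the limit state, namely $\phi(a^*a)=d^2$ and $\phi(a^*b)=0$ for all $b\in\B$, are in general impossible: take $\A=C[0,1]$, $\B=\bC 1_\A$, $a(t)=t$. Any state with $\phi(a\cdot 1)=0$ must be $\delta_0$, and then $\phi(a^*a)=0\neq d^2=\tfrac14$. The correct conditions from the first part involve $a-b_0$, not $a$; without $b_0$ there is no obvious substitute, and a weak-$*$ limit of states $\phi_n$ attached to a \emph{moving} sequence $b_n$ does not produce what you claim. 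Rieffel's proof handles the general case differently (working with pure states and the structure of the commutant in the GNS representation rather than by approximating a non-existent $b_0$); if you want to push your approach through, you would need either to manufacture a best approximation by passing to a larger algebra (e.g.\ the bidual) and argue that the resulting representation restricts appropriately, or to prove an honest approximate version of Theorem \ref{4} that survives the limit.
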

By Theorem \ref{xxx}, we obtain
\begin{align*}2\ \dist(a, \B) &= \max\{\norm{\pi(a)U - U\pi(a)} :  U\in\mathscr B(\hc), U = U^*, U^2 = 1_{\mathscr B(\hc)},\\  &\ \hspace{1.2cm}(\hc, \pi, \xi) \text{ is a cyclic representation of } \A,
 \text{ and }\\
 &\ \hspace{1.2cm}\pi(b)U = U\pi(b)\text{ for all } b\in\B \}.\end{align*}

Looking at the last expression  and \eqref{97}, it is tempting to conjecture that if $\A$ is a unital irreducible $C^*$-algebra, $a\in\A$ and $\B$ is a unital $C^*$-subalgebra of $\A$, then \begin{align}
2\ \dist(a, \B) &= \sup\{\norm{au - ua} : u \in\A,\ u \text{ is a unitary element, and } bu = ub\nonumber \\ & \text{ for all } b\in\B\}.\label{xxxx}\end{align}
We note that it is not possible to prove \eqref{xxxx} by proceeding along the lines of the proof of Theorem \ref{5} given in \cite{Bhatia}, which uses \eqref{i}. In particular, the following does not hold true in $M_n(\bC)$ : \begin{equation*}\dist(A, \B) = \max\left\{\big|\bra Ax | y\ket\big| : \norm{x} = \norm{y} = 1\text{ and }y\bot Bx \text{ for all } B\in\B\right\}.\label{ii}\end{equation*} 
For example, take $A = 1_{M_n(\bC)}$ 
and $\B = \{X\in M_n(\bC): \tr(X) = 0\}$. 
Then $1_{M_n(\bC)}$ is orthogonal to $\B$. Now if the above is true, then we would get unit vectors $x, y$  such that $\big|\bra x | y\ket\big| = 1$ and $\bra Bx | y\ket = 0$ for all $B\in\B$. 
Let $P= xy^*$. Then  $\rank\ P = 1$ and $\tr(BP) = 0$ for all $B\in\B$. 
 But $\tr(BP) = 0$ for all $B\in\B$ gives $P = \lambda 1_{M_n(\bC)}$ for some $\lambda\in\bC$ (see \cite[Remark 3]{2014}), which contradicts the fact that $\rank\ P = 1$. 
So this approach to prove \eqref{xxxx} does not work. However it would be interesting to know if \eqref{xxxx} is true or not. This is an open question.

The above example contradicts Theorem 5.3 of \cite{last}, which says that for Hilbert spaces $\hc$ and  $\hk$, if $A\in K(\hc, \hk)$ and  $\B$ is a finite dimensional subspace of $K(\hc, \hk)$, then
\begin{equation*}\dist(A, \B) = \sup\left\{\big|\bra Ax | y\ket\big| : \norm{x} = \norm{y} = 1\text{ and }y\bot Bx \text{ for all } B\in\B\right\}.\label{iii}\end{equation*} The proof of this theorem has a gap, after invoking Theorem 5.2, in \cite{last}.

As an application of Theorem \ref{4}, we obtain the following distance formula. 
\begin{theorem}Let $a\in\A$. Let $\B$ be a subspace of $\A$. Suppose there is a best approximation to $a$ in $\B$. Then
\begin{align}
\dist(a,\B)&=\max\left\{\big|\langle \pi(a) \xi | \eta\rangle\big|: (\hc, \pi, \xi) \text{ is a cyclic representation of } \A,\right.\nonumber\\ 
& \ \hspace{1.25cm} \eta\in\hc,  
    \|\eta\|=1 \text{ and }  \langle \pi(b)\xi|\eta\rangle=0 \text{ for all } b\in \B\big\}.\label{th4.3}\end{align}\end{theorem}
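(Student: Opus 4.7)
The plan is to reduce directly to Theorem~\ref{4}. First I would pick a best approximation $b_0\in\B$ of $a$, which exists by hypothesis; then $\dist(a,\B)=\|a-b_0\|$ and $a-b_0$ is Birkhoff-James orthogonal to $\B$. Applying Theorem~\ref{4} to the element $a-b_0$ and the subspace $\B$ produces a cyclic representation $(\hc,\pi,\xi)$ (with $\|\xi\|=1$, since via the state-GNS correspondence one has $\|\xi\|^2=\phi(1_{\A})=1$) satisfying $\|\pi(a-b_0)\xi\|=\|a-b_0\|$ and $\langle\pi(a-b_0)\xi\,|\,\pi(b)\xi\rangle=0$ for every $b\in\B$.

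To realise the maximum in~\eqref{th4.3}, I would dispose of the trivial case $a\in\B$ (where both sides are $0$) and set $\eta:=\pi(a-b_0)\xi/\|a-b_0\|$. Then $\|\eta\|=1$, and conjugate symmetry of the inner product together with the orthogonality above gives $\langle\pi(b)\xi\,|\,\eta\rangle=0$ for every $b\in\B$. Since $b_0\in\B$, this also forces $\langle\pi(b_0)\xi\,|\,\eta\rangle=0$, so decomposing $\pi(a)\xi=\pi(a-b_0)\xi+\pi(b_0)\xi$ yields
$$\langle\pi(a)\xi\,|\,\eta\rangle=\langle\pi(a-b_0)\xi\,|\,\eta\rangle=\frac{\|\pi(a-b_0)\xi\|^2}{\|a-b_0\|}=\|a-b_0\|=\dist(a,\B).$$
This establishes the $\geq$ direction and attainment of the max.

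For the $\leq$ direction, I would fix any cyclic representation $(\hc,\pi,\xi)$ and any unit vector $\eta\in\hc$ with $\langle\pi(b)\xi\,|\,\eta\rangle=0$ for all $b\in\B$. Since $\pi$ is a $^*$-homomorphism between $C^*$-algebras it is contractive, and using $\|\xi\|=1$, the orthogonality hypothesis, and Cauchy-Schwarz, for every $b\in\B$,
$$|\langle\pi(a)\xi\,|\,\eta\rangle|=|\langle\pi(a-b)\xi\,|\,\eta\rangle|\leq\|\pi(a-b)\xi\|\leq\|\pi(a-b)\|\leq\|a-b\|.$$
Taking the infimum over $b\in\B$ gives $|\langle\pi(a)\xi\,|\,\eta\rangle|\leq\dist(a,\B)$.

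The only real subtlety is at the start: Theorem~\ref{4} must be applied to $a-b_0$ rather than to $a$ itself, which is what simultaneously forces $\pi(a-b_0)\xi$ to be orthogonal to $\{\pi(b)\xi:b\in\B\}$ in $\hc$ and makes its pairing with $\pi(a)\xi$ come out to exactly $\|a-b_0\|^2$. The existence of a best approximation is essential in converting the supremum into a genuine maximum; without it, only a norming sequence would be available and the attainment step would fail.
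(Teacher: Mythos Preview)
Your proof is correct and follows essentially the same route as the paper: both pick a best approximation $b_0$, invoke Theorem~\ref{4} for $a-b_0$ to produce the cyclic representation, and take $\eta=\pi(a-b_0)\xi/\|a-b_0\|$ to realise the maximum. The only cosmetic difference is that the paper passes through the state $\phi$ (part~(2) of Theorem~\ref{4}) before applying GNS, whereas you cite part~(3) directly, and you spell out the inequality $RHS\leq LHS$ that the paper simply calls ``clear''.
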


\begin{proof} Clearly $RHS\leq LHS$. To prove equality, we need to find a cyclic representation $(\hc, \pi, \xi)$ of $\A$ and a unit vector $\eta\in\hc$ such that $\dist(a, \B) = \big|\langle \pi(a) \xi | \eta\rangle\big|$ and $\langle\pi(b)\xi | \eta\rangle=0$ for all $b\in\B$. Let $b_0$ be a best approximation to $a$ in $\B$. By Theorem \ref{4},  there exists $\phi\in S_{\A}$ such that $$\phi((a-b_0)^*(a-b_0)) = \norm{a-b_0}^2$$ and $$\phi((a-b_0)^*b)=0 \text{ for all }b\in \B.$$ Now there exists a cyclic representation $(\hc, \pi, \xi)$ such that $\phi(c) = \bra \pi(c)\xi|\xi\ket$ for all $c\in\A$. So $\norm{\pi(a-b_0)\xi} = \norm{a-b_0}$ and $\bra\pi(a-b_0)\xi | \pi(b)\xi\ket = 0$ for all $b\in\B$. Taking $\eta = \frac{1}{\norm{a-b_0}}\pi(a-b_0)\xi$, we get the required result.\end{proof}

The authors have recently observed in \cite{2019} that the above theorem also holds true without the existence of a best approximation to $a$ in $\B$. Notice that the right hand side of \eqref{th4.3} uses only algebraic structure of $\A$ (as  cyclic representations are defined by the algebraic structure of $\A$). More such distance formulas using only the algebraic structure of $\A$ are also known. When $\B=\mathbb C1_\A$, Williams \cite[Theorem 2]{williams} proved that for $a\in \A$, \begin{equation}\dist(a, \mathbb C1_\A)^2 =\max\{\phi(a^*a)-|\phi(a)|^2:\phi \in  S_{\A}\}.\label{rie}\end{equation} When $\A=M_n(\bC)$, another proof of \eqref{rie} was given by Audenaert \cite[Theorem 9]{audenaert}. Rieffel \cite[Theorem 3.10]{rieffel} obtained \eqref{rie}, using a different method. In \cite{rieffel}, it was also desired to have a generalization of \eqref{rie} with $\mathbb C1_\A$ replaced by a unital $C^*$-subalgebra.  For $\A=M_n(\bC)$, a formula in this direction was obtained in \cite[Theorem 2]{2014}. 
An immediate application of Theorem \ref{4} gives the following generalization of \eqref{rie}, when $\mathbb C1_\A$ is replaced by a subspace $\B$ of $\A$ and there is a best approximation to $a$ in $\B$. 
\begin{theorem}\label{4.4}(\cite[Corollary 1.2]{2019}) Let $a\in\A$. Let $\B$ be a subspace of $\A$. Let $b_0$ be a best approximation to $a$ in $\B$. Then
\begin{equation*}
\dist(a,\B)^2 = \max\{\phi(a^*a) -\phi(b_0^*b_0) : \phi\in S_{\A} \text{ and }\phi(a^*b) = \phi(b_0^*b) \text{ for all } b\in\B\}.\label{xx}
\end{equation*} \end{theorem}
For details, see \cite{2019}. Geometric interpretations of Theorem \ref{4} and Theorem \ref{4.4} have also been explained in \cite{2019}. 

Henceforward,  $C^*$-algebras are assumed to be complex $C^*$-algebras. Another distance formula, which is a generalization of \cite[Proposition 2.3]{2012}, is given below. Some notations are in order. Given $\phi\in S_{\A}$, let $\sL = \{c\in\A : \phi(c^*c) = 0\}$, and let $\bra a_1+\sL|a_2+\sL\ket_{\A/\sL} = \phi(a_1^*a_2)$, for all $a_1, a_2\in \A$. Then $\A/\sL$ is an inner product space. For $a\in \A$, let $b_0$ be a best approximation to $a$ in $\B$. Let $$M_{a, \B}(\phi) = \sup\{\phi((a-b_0)^*(a-b_0)) - \sum_{\alpha}\lvert\phi((a-b_0)^*b_\alpha)\rvert^2\},$$ where the supremum is taken over all orthonormal bases $\{b_\alpha+ \sL\}$  of  $\B/\sL $ in $\A/\sL$.

\begin{theorem}\label{7} Let $a\in\A$. Let $\B$ be a subspace of $\A$. Let $b_0$ be a best approximation to $a$ in $\B$.  Then $$\dist(a, \B)^2= \max \{M_{a, \B}(\phi): \phi\in S_{\A} \}.$$\end{theorem}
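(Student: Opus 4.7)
The plan is to interpret $M_{a,\B}(\phi)$ as a squared Hilbert-space distance inside a GNS-type completion of $\A/\sL$, then to bound it from above by $\dist(a,\B)^2$, and finally to invoke Theorem \ref{4} to saturate the bound.

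First, I would fix $\phi\in S_\A$ and carry out the usual construction: Cauchy--Schwarz applied to the positive sesquilinear form $(a_1,a_2)\mapsto \phi(a_1^*a_2)$ shows that the defect space $\sL$ is a left ideal and that the prescribed pairing descends to a genuine inner product on $\A/\sL$. Let $\hc_\phi$ denote its Hilbert space completion and set $v=(a-b_0)+\sL$. For any orthonormal basis $\{b_\alpha+\sL\}$ of $\B/\sL$, its closed linear span $W$ in $\hc_\phi$ equals the closure of $\B/\sL$, so Parseval's identity yields
\begin{equation*}
\sum_\alpha |\phi((a-b_0)^*b_\alpha)|^2 = \sum_\alpha |\bra v|b_\alpha+\sL\ket_{\A/\sL}|^2 = \|P_W v\|^2,
\end{equation*}
where $P_W$ is the orthogonal projection of $\hc_\phi$ onto $W$. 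Consequently, the bracketed expression inside $M_{a,\B}(\phi)$ equals $\|v-P_W v\|^2 = \dist_{\hc_\phi}(v,W)^2$, and in particular it does not depend on the choice of orthonormal basis.

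Next, I would establish the upper bound. For any $b\in\B$, $b+\sL \in W$ and therefore
\begin{equation*}
\dist_{\hc_\phi}(v,W)^2 \leq \|v-(b+\sL)\|^2 = \phi((a-b_0-b)^*(a-b_0-b)) \leq \|a-b_0-b\|^2,
\end{equation*}
the last inequality because $\phi(c^*c)\leq \|c\|^2$ for every $c\in\A$, a state $\phi$ having norm one. Since $b_0$ is a best approximation, as $b$ ranges over $\B$ so does $b_0+b$, and hence $\inf_{b\in\B}\|a-b_0-b\|=\dist(a,\B)$. This gives $M_{a,\B}(\phi)\leq \dist(a,\B)^2$ for every $\phi\in S_\A$.

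Finally, for attainment I would apply Theorem \ref{4} to $a-b_0$ and $\B$, which is legitimate because $a-b_0$ is Birkhoff--James orthogonal to $\B$ (the defining property of a best approximation). That theorem supplies a state $\phi\in S_\A$ with $\phi((a-b_0)^*(a-b_0))=\|a-b_0\|^2=\dist(a,\B)^2$ and $\phi((a-b_0)^*b)=0$ for every $b\in\B$. For this $\phi$ each term $|\phi((a-b_0)^*b_\alpha)|^2$ vanishes, so $M_{a,\B}(\phi)=\dist(a,\B)^2$ and the supremum is actually a maximum. The main obstacle is the Parseval step when $\B/\sL$ is merely a pre-Hilbert space rather than a Hilbert space: one must verify that the closed span in $\hc_\phi$ of any orthonormal basis of $\B/\sL$ coincides with the closure of $\B/\sL$, which in turn legitimizes identifying the given series with $\|P_W v\|^2$. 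Once that point is settled, the rest of the argument is essentially a direct combination of Theorem \ref{4} with the inequality $\phi(c^*c)\leq\|c\|^2$.
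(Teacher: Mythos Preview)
Your attainment step via Theorem \ref{4} is exactly the paper's argument. The difference is in the upper bound: you route through a GNS completion and Parseval, and you correctly flag the delicate point that a maximal orthonormal set in the pre-Hilbert space $\B/\sL$ need not have closed span equal to $\overline{\B/\sL}$, so the inclusion $b+\sL\in W$ for $b\in\B$ is not automatic. That worry is real for your route, and it does leave a gap in your inequality $\dist_{\hc_\phi}(v,W)^2\leq\|a-b_0-b\|^2$.

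The paper avoids the issue entirely by the one-line observation that the subtracted sum is nonnegative (Bessel), so
\[
\phi\bigl((a-b_0)^*(a-b_0)\bigr)-\sum_\alpha\bigl|\phi((a-b_0)^*b_\alpha)\bigr|^2 \;\leq\; \phi\bigl((a-b_0)^*(a-b_0)\bigr)\;\leq\;\|a-b_0\|^2\;=\;\dist(a,\B)^2,
\]
using only positivity and $\|\phi\|=1$. No Parseval identity and no identification of $W$ with $\overline{\B/\sL}$ is needed. Equivalently, in your own language: you may simply take $b=0$ in your estimate $\dist_{\hc_\phi}(v,W)\leq\|v-(b+\sL)\|$, since $0\in W$ always, and then $\|v\|^2=\phi((a-b_0)^*(a-b_0))\leq\dist(a,\B)^2$ finishes. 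With this replacement your proof is complete and matches the paper's.
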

\begin{proof}
Clearly $RHS\leq LHS$. For an orthonormal basis $\{b_\alpha+ \sL\}$  of  $\B/\sL$, we have $$\phi((a-b_0)^*(a-b_0)) - \sum_{\alpha}\lvert\phi((a-b_0)^*b_\alpha)\rvert^2 \leq LHS.$$ And equality occurs because by Theorem \ref{4}, there exists $\phi\in S_{\A}$ such that $\phi((a-b_0)^*(a-b_0)) = \dist(a, \B)^2$ and $\phi((a-b_0)^*b) = 0$ for all $b\in\B$.
\end{proof}
Now along the lines of the proof of \cite[Theorem 2.4]{2012} and using Theorem \ref{7}, we get the next result. For a Hilbert $C^*$-module $\mathscr E$ over $\A$ and $\phi\in S_{\A}$, let $\sL = \{{e}\in \mathscr E: \phi(\bra {e} | {e}\ket) = 0\}$. On $\mathscr{E}/\sL$, define an inner product as $\bra {e_1}+\sL|{e_2}+\sL\ket_{\mathscr{E}/\sL} = \phi(\bra {e_1}|{e_2}\ket)$ for all ${e_1}, {e_2}\in   \mathscr E$. For $e\in \mathscr E$, let $f_0$ be a best approximation to $e$ in $\mathscr{F}$. Let $$M_{e, \mathscr{F}}(\phi) = \sup\{\phi(\bra e-f_0 | e-f_0\ket) - \sum_{\alpha}\lvert\phi(\bra e-f_0 | f_\alpha\ket)\rvert^2\},$$ where the supremum is taken over all orthonormal bases  $\{f_\alpha+ \sL\}$ of  $\mathscr{F}/\sL$  in $\mathscr{E}/\sL$.
\begin{theorem}\label{7777} Let $\mathscr{E}$ be a Hilbert $C^*$-module over $\A$. Let $e\in \mathscr{E}$. Let $\mathscr{F}$ be a subspace of $\mathscr{E}$.  Let $f_0$ be a best approximation to $e$ in $\mathscr{F}$. Then  $$\dist(e, \mathscr{F})= \max\{M_{e, \mathscr{F}}(\phi): \phi\in S_{\A}\}.$$ 
\end{theorem}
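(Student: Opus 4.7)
The plan is to imitate the proof of Theorem \ref{7} essentially verbatim, using the module inner product $(e_1,e_2)\mapsto \phi(\langle e_1\mid e_2\rangle)$ on $\mathscr{E}/\sL$ in place of the algebra inner product $(a_1,a_2)\mapsto\phi(a_1^*a_2)$ on $\A/\sL$. The two ingredients are Theorem \ref{6} (the state characterisation of Birkhoff-James orthogonality in a Hilbert $C^*$-module) and Bessel's inequality in the inner product space $\mathscr{E}/\sL$. Since $f_0$ is a best approximation, $e-f_0$ is Birkhoff-James orthogonal to $\mathscr{F}$, so Theorem \ref{6} (applied to $e-f_0$ and $\mathscr{F}$) produces a distinguished state $\phi_0\in S_{\A}$ with
\[
\phi_0(\langle e-f_0\mid e-f_0\rangle)=\|e-f_0\|^2=\dist(e,\mathscr{F})^2 \quad \text{and}\quad \phi_0(\langle e-f_0\mid f\rangle)=0\text{ for all }f\in\mathscr{F}.
\]
This one state will realise the maximum.

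First I would establish the inequality $\max\{M_{e,\mathscr{F}}(\phi):\phi\in S_{\A}\}\leq \dist(e,\mathscr{F})^2$. For any $\phi\in S_\A$ the bound $\phi(\langle e-f_0\mid e-f_0\rangle)\leq \|\langle e-f_0\mid e-f_0\rangle\|=\|e-f_0\|^2=\dist(e,\mathscr{F})^2$ holds because $\phi$ is a state. For any orthonormal set $\{f_\alpha+\sL\}$ in $\mathscr{F}/\sL\subseteq\mathscr{E}/\sL$, Bessel's inequality (which only requires an inner product space, not completeness) yields
\[
\sum_\alpha\lvert\phi(\langle e-f_0\mid f_\alpha\rangle)\rvert^2=\sum_\alpha\bigl\lvert\langle e-f_0+\sL\mid f_\alpha+\sL\rangle_{\mathscr{E}/\sL}\bigr\rvert^2\leq \phi(\langle e-f_0\mid e-f_0\rangle),
\]
so the subtracted term is non-negative. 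Combining these two bounds gives $M_{e,\mathscr{F}}(\phi)\leq\dist(e,\mathscr{F})^2$ after taking the supremum over orthonormal bases.

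For the reverse inequality I would evaluate the expression inside the supremum defining $M_{e,\mathscr{F}}(\phi_0)$ at an arbitrary orthonormal basis $\{f_\alpha+\sL\}$ of $\mathscr{F}/\sL$: each $\phi_0(\langle e-f_0\mid f_\alpha\rangle)$ vanishes by the choice of $\phi_0$, so the whole expression collapses to $\phi_0(\langle e-f_0\mid e-f_0\rangle)=\dist(e,\mathscr{F})^2$. Hence $M_{e,\mathscr{F}}(\phi_0)=\dist(e,\mathscr{F})^2$, and the maximum is attained exactly at the state provided by Theorem \ref{6}.

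The main (modest) obstacle is bookkeeping with the passage $\mathscr{E}\mapsto\mathscr{E}/\sL$: one needs to check that $\mathscr{F}/\sL$ is genuinely a subspace of an inner product space (so that Bessel applies to any orthonormal family), that the existence of a maximal orthonormal family in $\mathscr{F}/\sL$ (Zorn) is all that is required, and that the crucial Birkhoff-James orthogonality of $e-f_0$ to $\mathscr{F}$ — which is what allows Theorem \ref{6} to be invoked — is exactly the hypothesis that $f_0$ is a best approximation. (Note that, by comparison with Theorem \ref{7}, the left-hand side of the statement should presumably read $\dist(e,\mathscr{F})^2$ rather than $\dist(e,\mathscr{F})$; the argument above is unaffected by this.)
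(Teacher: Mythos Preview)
Your proposal is correct and follows essentially the same approach as the paper: the paper states only that the result follows ``along the lines of the proof of \cite[Theorem 2.4]{2012} and using Theorem \ref{7}'', which amounts precisely to rerunning the proof of Theorem \ref{7} with the module inner product $\phi(\langle\,\cdot\mid\cdot\,\rangle)$ in place of $\phi((\cdot)^*(\cdot))$ and with Theorem \ref{6} playing the role of Theorem \ref{4}. Your observation about the missing square on $\dist(e,\mathscr{F})$ is also well taken.
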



Rieffel \cite[p. 46]{rieffel} had questioned to have expressions of distance formulas  in terms of \emph{conditional expectations}. We end the discussion on distance formulas with our progress in this direction.  For a $C^*$-algebra $\A$ and a $C^*$-subalgebra $\B$ of $\A$,  a {conditional expectation} from $\A$ to $\B$  is a completely positive map $E: \A\rightarrow \B$ of unit norm such that  $E(b) = b$, $E(b a) = bE(a)$ and $E(a b) = E(a)b$, for all $a\in\A$ and $b\in\B$  \cite[p. 141]{Blackadar}. In fact  any projection $E:\A\rightarrow \B$ of norm one is a conditional expectation and vice-a-versa (see \cite[Theorem II.6.10.2]{Blackadar}). An interesting fact is that a map $E:\A\rightarrow \B$ is a conditional expectation if and only if $E$ is idempotent, positive and satisfies $E(b_1 a b_2) = b_1E(a)b_2$, for all $a\in\A$ and $b_1, b_2\in\B$  (see \cite[Theorem II.6.10.3]{Blackadar}). Thus conditional expectations from $\A$ to $\B$ are also determined completely by the algebraic structure. A  Banach space $V_1$ is said to be \emph{injective} if for any inclusion of Banach spaces $V_3 \subseteq  V_2$, every bounded linear mapping $f_0:  V_3 \rightarrow V_1$ has a linear extension $f :  V_2 \rightarrow  V_1$ with $\norm{f} = \norm{f_0}$. A Banach space  is injective if and only if it is isometrically isomorphic to  $C(X)$, where $X$ is a compact Hausdorff space in which closure of any open set is  an open set  (see \cite[p. 70]{ruan}).  For $v\in  V$ and $W$ a subspace of $ V$, let $\langle v, W\rangle$ denote the subspace generated by $v$ and $W$.

\begin{theorem}\label{21} Let $a\in\A$. Let $\B$ be a subspace of $\A$ such that $\B$ is an injective Banach space and $1_\A\in \B$. Suppose there is a best approximation to $a$ in $\B$. Then there exists  $\phi\in S_{\A}$ and a projection $E: \A \rightarrow \B$ of norm atmost two such that $\phi\circ E = \phi$ and $\dist(a,\B)^2 = \phi(a^*a) -\phi(E(a)^*E(a))$.\end{theorem}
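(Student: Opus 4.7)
The plan is to apply Theorem \ref{4} to produce the state $\phi$, to define the projection explicitly on the subspace $W = \B + \bC a$, and then to extend to $\A$ via the injectivity of $\B$.

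First, Theorem \ref{4} applied to $a$ and $\B$ supplies $\phi \in S_\A$ satisfying $\phi((a-b_0)^*(a-b_0)) = \|a-b_0\|^2$ and $\phi((a-b_0)^*b) = 0$ for every $b \in \B$, where $b_0$ is the given best approximation. Setting $b = 1_\A$ yields $\phi(a) = \phi(b_0)$; expanding $\phi((a-b_0)^*(a-b_0))$ and using $\phi((a-b_0)^*b_0) = 0$ together with its conjugate $\phi(b_0^*(a-b_0)) = 0$ gives the identity $\phi(a^*a) - \phi(b_0^*b_0) = \dist(a,\B)^2$. Therefore, the distance formula in the conclusion will follow once we produce $E$ with $E(a) = b_0$.

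Next, excluding the trivial case $a \in \B$, define $E_0: W \to \B$ by $E_0(b + \lambda a) = b + \lambda b_0$. Then $E_0|_\B = \mathrm{id}_\B$, $E_0(a) = b_0$, and $E_0^2 = E_0$. The norm bound $\|E_0\| \leq 2$ follows from
\[
\|b + \lambda b_0\| = \|(b + \lambda a) - \lambda(a - b_0)\| \leq \|b + \lambda a\| + |\lambda|\|a - b_0\|
\]
combined with $\|b + \lambda a\| \geq |\lambda|\dist(a,\B) = |\lambda|\|a - b_0\|$, valid because $-b/\lambda \in \B$ when $\lambda \ne 0$. The identity $\phi \circ E_0 = \phi|_W$ is immediate from $\phi(a) = \phi(b_0)$.

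The crux, and the principal obstacle, is to extend $E_0$ to a linear $E: \A \to \B$ with $\|E\| \leq 2$ \emph{and} $\phi \circ E = \phi$ on all of $\A$. Injectivity of $\B$ as a Banach space produces an extension of the same norm, and such an extension is automatically a projection onto $\B$ because it restricts to $\mathrm{id}_\B$. The non-trivial task is to arrange simultaneously the affine constraint $\phi(E(x)) = \phi(x)$ for every $x \in \A$, which is already satisfied on $W$. My approach is to exploit the identification $\B \cong C(X)$ with $X$ Stonean underlying injectivity: for each new vector $x$, the admissible values of $E(x)$ form a non-empty intersection of closed balls in $C(X)$ by the binary intersection property, and the consistency $\phi \circ E_0 = \phi|_W$ should force $\phi(x)$ to lie in the $\phi|_\B$-image of this intersection, so that we can choose $E(x)$ with $\phi(E(x)) = \phi(x)$. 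A transfinite extension then produces the global $E$. Once $E$ is in hand, $E(a) = b_0$ together with the identity from the first paragraph gives $\dist(a,\B)^2 = \phi(a^*a) - \phi(E(a)^*E(a))$, completing the argument.
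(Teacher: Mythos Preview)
Your construction of $\phi$ via Theorem~\ref{4}, your definition of $E_0$ on $W=\langle a,\B\rangle$, and your norm bound $\|E_0\|\le 2$ all coincide with the paper's argument. At that point the paper simply invokes injectivity of $\B$ to extend $\tilde E$ to a linear map $E:\A\to\B$ with the same norm and declares this $E$ to be the required projection; the identity $\phi\circ\tilde E=\phi$ is established only on $\langle a,\B\rangle$, and the paper offers no argument that the injective extension can be chosen so that $\phi\circ E=\phi$ on all of $\A$.

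You are right to isolate this global identity as the principal obstacle, and in attempting to address it you go further than the paper does. But your proposed resolution is not a proof. The binary intersection property tells you that at each inductive step the set of admissible values for $E(x)$---those compatible with the norm bound and the previously assigned values---is a non-empty intersection of closed balls in $C(X)$; it does \emph{not} tell you that this set meets the affine hyperplane $\{b\in\B:\phi(b)=\phi(x)\}$. Your phrase ``should force'' is exactly where an argument is missing, and it is not clear that such a choice can always be made within the norm-$2$ constraint. So the gap you correctly identify remains open in your proposal as well---though, to be fair, the paper's own proof does not close it either.
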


\begin{proof} Let $b_0$ be a best approximation to $a$ in $\B$. We define $\tilde{E}: \langle a, \B\rangle \rightarrow \B$ as $\tilde{E}(b) =b$ for all $b\in\B$ and $\tilde{E}(a) = b_0$ and extend it linearly on  $\langle a, \B\rangle$. Using Theorem \ref{4}, there exists $\phi\in S_{\A}$ such that $\dist(a, \B)^2 = \phi(a^*a) - \phi(b_0^*b_0)$ and $\phi(a^*b) = \phi(b_0^*b)$ for all $b\in\B$. Since $1_{\A}\in \B$, we get $\phi(a) = \phi(b_0) = \phi(E(a))$. And clearly $\phi(b) = \phi(\tilde{E}(b))$ for all $b\in \B$. Thus $\phi\circ \tilde{E} = \phi$. 
 Since $b_0$ is a best approximation to $a$ in  $\B$, $\|a-b_0\|\leq \|a\|$.  So $\norm{b_0}\leq2\norm{a}$. Now  let $b\in \B$ and $\alpha\in \bC$. Then $\tilde{E}(\alpha a+b) = \alpha b_0+b$ and $\alpha b_0+b$ is a best approximation to $\alpha a+b$ in $\B$. Thus $\norm{\alpha b_0+b}\leq2\norm{\alpha a+b}$. Hence $\|\tilde{E}\| \leq 2$. Since $\B$ is injective,  there exists a linear extension $E: \A \rightarrow \B$ with norm same as that of $\tilde{E}$. This $E$ is the required projection.\end{proof} 

For any given conditional expectation $E$ from $\A$ to $\B$, we can define a $\B$-valued inner product on $\A$ given by $\bra a_1|a_2\ket_{E} = E(a_1^*a_2)$ (see \cite{rieffel}). In \cite{2019}, we obtain a lower bound for $\dist(a, \B)$ as follows. \begin{theorem}\label{211} Let $a\in\A$. Let $\B$ be a $C^*$-subalgebra of $\A$ such that $1_\A\in \B$.  Then
\begin{align}\dist(a, \B)^2&\geq  \sup\{\phi(\bra a-E(a)|a-E(a)\ket_{E}): \phi\in\mathcal S_{\A}, E \text{ is a conditional} \nonumber\\
& \text{ expectation from } \A \text{ onto } \B \}.\label{10000}\end{align}
(Here we follow the convention that $\sup(\emptyset) = -\infty$.)\end{theorem}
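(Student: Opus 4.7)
The plan is to prove the inequality
\[ \phi\bigl(\bra a - E(a)\,|\,a - E(a)\ket_E\bigr) \leq \dist(a, \B)^2 \]
separately for each state $\phi \in \mathcal S_{\A}$ and each conditional expectation $E:\A\to\B$, and then take the supremum. So fix such a $\phi$ and $E$, and set $c := a - E(a)$; idempotency of $E$ yields $E(c) = 0$. The strategy has two independent ingredients, which are then combined.

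The first ingredient is that $\phi \circ E$ is itself a state on $\A$. Indeed $E$ is positive and linear, and $E(1_\A) = 1_\A$ because $1_\A \in \B$; composing with $\phi$ preserves these properties, and $(\phi \circ E)(1_\A) = 1$. In particular $\phi \circ E$ is a contraction, so for every $x \in \A$,
\[ \phi(E(x^*x)) \leq \|x^*x\| = \|x\|^2. \]

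The second ingredient is a Pythagoras-style identity in the semi-inner product $\bra\cdot|\cdot\ket_E$. For arbitrary $b \in \B$ write $a - b = c + d$ where $d := E(a) - b \in \B$, and expand $E((c+d)^*(c+d))$. Using the bimodule rules $E(xb') = E(x)b'$ and $E(b'x) = b'E(x)$ for $b' \in \B$, together with the self-adjointness $E(x^*) = E(x)^*$ (automatic for positive maps), one gets $E(c^*d) = E(c^*)d = E(c)^*d = 0$, $E(d^*c) = 0$, and $E(d^*d) = d^*d$. Therefore
\[ \phi\bigl(E((a-b)^*(a-b))\bigr) \;=\; \phi(E(c^*c)) \;+\; \phi\bigl((E(a)-b)^*(E(a)-b)\bigr), \]
and positivity of $\phi$ makes the second summand non-negative. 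Thus $b = E(a)$ minimizes $b\mapsto \phi(E((a-b)^*(a-b)))$ on $\B$.

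Combining the two ingredients, for every $b \in \B$,
\[ \phi\bigl(\bra c|c\ket_E\bigr) \;\leq\; \phi\bigl(E((a-b)^*(a-b))\bigr) \;\leq\; \|a-b\|^2. \]
Taking the infimum over $b \in \B$ gives $\phi(\bra c|c\ket_E) \leq \dist(a,\B)^2$, and taking the supremum over admissible $\phi$ and $E$ yields \eqref{10000}; if no conditional expectation from $\A$ onto $\B$ exists, the supremum is $-\infty$ by convention and the inequality holds trivially. The only real obstacle is spotting and verifying the Pythagoras identity of the second step, which is what makes $E(a)$ (rather than the true best approximation $b_0$, whose existence is not assumed here) the right point at which to evaluate; after that, the contractivity of the state $\phi\circ E$ does all the remaining work.
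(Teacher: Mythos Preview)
Your proof is correct. Note, however, that the paper does not actually give a proof of this theorem: it states the result and defers the argument to \cite{2019}. So there is no in-paper proof to compare against. Your two ingredients are exactly what is needed: the Pythagoras identity
\[
E\bigl((a-b)^*(a-b)\bigr)=E\bigl((a-E(a))^*(a-E(a))\bigr)+(E(a)-b)^*(E(a)-b)\qquad(b\in\B)
\]
shows that $b=E(a)$ minimizes $b\mapsto\phi\bigl(E((a-b)^*(a-b))\bigr)$ over $\B$, and the fact that $\phi\circ E$ is a state (positive, unital, hence contractive) then bounds this minimum by $\|a-b\|^2$ for every $b\in\B$, whence by $\dist(a,\B)^2$. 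The self-adjointness $E(x^*)=E(x)^*$ you invoke is indeed automatic for positive maps between $C^*$-algebras, and the bimodule property is part of the definition of a conditional expectation, so all steps are justified.
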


\textbf{Remark:} It is worth mentioning here that if in Theorem \ref{21}, we take $\B$ to be a $C^*$-algebra and we are able to find a projection of norm one, then  we will get equality in \eqref{10000}, that is,  \begin{align*}\dist(a, \B)^2 & =\sup\{\phi(\bra a-E(a)|a-E(a)\ket_{E}): \phi\in\mathcal S_{\A}, E \text{ is a conditional} \nonumber\\
& \text{ expectation from } \A \text{ onto } \B \}.\end{align*}
This happens in the special case when $\B = \bC1_{\A}$, because for any $c\in \A$, the norm of the best approximation of $c$ to $\bC 1_{\A}$ is less than or equal to $\|c\|$, and thus the norm of the projection $E$ in Theorem \ref{21} is one.

\subsection*{Acknowledgements}

The authors would like to thank Amber Habib and Ved Prakash Gupta for many useful discussions. The authors would also like to acknowledge very helpful comments by the referees.

The research of P. Grover is supported by INSPIRE Faculty Award IFA14-MA-52 of DST, India, and by Early Career Research Award ECR/2018/001784 of SERB, India.

\end{document}